\documentclass{article}
\usepackage{graphicx} 
\usepackage{amsmath}
\usepackage{amsthm}
\usepackage{color}
\usepackage{amsfonts}
\usepackage[OT1]{fontenc}
\usepackage{tikz}
\usetikzlibrary{cd}
\usepackage{adjustbox}
\usepackage{url}
\usepackage{enumerate}

\usepackage[hidelinks]{hyperref}

\usepackage[style=numeric,
            backend=bibtex,
            giveninits=true,
            sorting=nyt,
            maxbibnames=99,
            isbn=false, doi=true, eprint=true, url=true]{biblatex}
\numberwithin{equation}{section}

\theoremstyle{plain}
\newtheorem{theorem}{Theorem}[section]
\newtheorem{prp}[theorem]{Proposition}
\newtheorem{cor}[theorem]{Corollary}
\newtheorem{lemma}[theorem]{Lemma}
\newtheorem{conj}{Conjecture}

\theoremstyle{definition}
\newtheorem{example}[theorem]{Example}
\newtheorem{defi}[theorem]{Definition}
\newtheorem{remark}[theorem]{Remark}

\DeclareMathOperator{\Hom}{Hom}
\DeclareMathOperator{\im}{im}
\DeclareMathOperator{\iso}{iso}
\DeclareMathOperator{\Ad}{Ad}
\DeclareMathOperator{\dRtmp}{dR}
\DeclareMathOperator{\Carttmp}{Cart}
\DeclareMathOperator{\const}{const}
\DeclareMathOperator{\id}{id}
\DeclareMathOperator{\Tot}{Tot}
\DeclareMathOperator{\BSStmp}{BSS}
\DeclareMathOperator{\pr}{pr}
\DeclareMathOperator{\End}{End}

\newcommand{\ddR}{d_{\dRtmp}}
\newcommand{\dCart}{d_{\Carttmp}}
\newcommand{\BSS}{\Omega_{\BSStmp}}
\newcommand{\dbl}{/ \! /}

\title{Perspectives on the Bott spectral sequence for Lie groupoids}
\author{Annika Kraasch-Tarnowsky}
\date{July 2026}

\begin{document}

\maketitle

\begin{abstract}
    The cohomology of a differentiable stack can be modelled by the Bott-Shulman-Stasheff bicomplex. However, computing its cohomology is still difficult in many examples. Improving on this requires additional methods such as using spectral sequences and finding models for the cohomology of each column of the bicomplex. This paper provides a first attempt of relating and unifying two approaches of this form. Firstly, Arias Abad has generalised the Bott spectral sequence to Lie groupoids by introducing representations up to homotopy. The second approach is the recent work on a model for the cohomology of differentiable stacks associated to proper and regular Lie groupoids by this author.
\end{abstract}

\section{Introduction}

The Bott spectral sequence, originally appearing in \cite{Bott73} to compute the cohomology groups of $B \mathcal{G}$ for a Lie group $\mathcal{G}$ has been used in various approaches of computing differentiable stack cohomology. This includes research by Getzler \cite{Getz94} on equivariant cohomology of non-compact groups and subsequent work such as by Arias Abad and Uribe \cite{AU15}. In \cite{Abad08} by Arias Abad and his research together with Crainic \cite{AC11, AC13}, a generalisation of the Bott spectral sequence to Lie groupoids was proven. It was constructed to compute the cohomology of the Bott-Shulman-Stasheff bicomplex and its existence has various consequences, for example regarding the van-Est map.

More recently, there have been advances by this author \cite{KT25} in computing the cohomology of differentiable stacks using multiplicative Ehresmann connections as introduced by Fernandes and Marcut \cite{FM23}. These connections induce a notion of invariance using a construction closely related to the concept of jet groupoids, their induced actions and fat groupoids \cite{GSM17}. Underlying the ensuing computation of the cohomology of the Bott-Shulman-Stasheff bicomplex is the same spectral sequence as used by Arias Abad, where the assumptions of properness and regularity allow for a different and very explicit identification of the first page.

The aim of this paper is to review the results of Arias Abad and this author, as well as to explore how they can be related to each other. For this, we will review both constructions with a special emphasis on the regular case of the adjoint representation up to homotopy. It will already be notable from this exposition that both approaches yield terms of the same form for the cohomology of the columns of the Bott-Shulman-Stasheff bicomplex.

We will then study how the concept of a multiplicative Ehresmann connection and related notions such as invariance can be combined with various constructions related to representations up to homotopy. In the main theorem of this paper, this transfer is used to prove that there is an interpretation of the isotropy Cartan model for proper and regular groupoids in terms of representations up to homotopy. Lastly, at the end of this paper there is an explanation on how we may hope to incorporate the well-known Cartan model \cite{Ca50} into this picture as an outlook on ongoing work.

This paper is written in honour of Rui Fernandes' 60th birthday. I would like to thank him for his outstanding support during my PhD project and my visit to Urbana-Champaign in 2024. He majorly influenced my understanding of the $J^E G$-invariant differential forms by suggesting to formulate them using jets and induced actions.

This paper contains results developed within or parallel to my PhD project with the University of Bonn, for which I was a guest and researcher at the Max-Planck-Institute for Mathematics in Bonn. During my time there, I have also been able to discuss with and learn from Camilo Arias Abad, whom I would like to thank for the many insights of which some are presented in this work. This paper has also been influenced by numerous more discussions with a lot of different people, in particular with Christian Blohmann and Sven Holtrop. I would like to thank all of these colleagues for their contributions and the aforementioned institutions for their support.

\section{The coadjoint representation up to homotopy}

To begin, let us review the generalised Bott spectral sequence of Arias Abad. Its first page is formulated in terms of representations up to homotopy.

\begin{defi}[cf. \cite{AC13}, Definition 3.1., Proposition 3.2.]
    Let $G$ be a Lie groupoid. A (split) representation up to homotopy is a graded vector bundle $E^\bullet$ and terms
    \[
    R_i \in \Gamma(G_i, \Hom((p^i_i)^*E^\bullet, (p^0_i))^*E^{\bullet+1-i}))
    \]
    such that the associated derivation on
    \[
    C^\bullet(G, E) := \Gamma(G_\bullet, (p^0_\bullet)^*E)
    \]
    defines a differential. The term $R_0$ can equivalently be expressed as a differential on $E$.
\end{defi}

\begin{remark}
    Note that by the Serre-Swan theorem we can equivalently define a representation up to homotopy not by the datum of a graded vector bundle, but by the datum of the $C^\bullet(G)$-module itself, a viewpoint for example taken in \cite{St19}. This is why, following the convention of \cite{Ob26}, the above definition is called split representation up to homotopy (in contrast to an abstract representation up to homotopy). Alternatively, we may only want to consider the case where we can restrict to a projective and finitely generated $C^\bullet_{N}(G)$-module, denoting the normalised cochain subalgebra. This is equivalent to unitality, the condition that $R_1|_{G_0} = \const_{\id}$ and that
    \[
    R_i \equiv 0
    \]
    on $\bigcup \im(s_j) \subset G_i$ for $i>1$ (cf. \cite{AC13}, Definition 3.6; \cite{St19}, Lemma 3.4.1).
\end{remark}

\begin{defi}[cf. \cite{AC11}, Proposition 3.14.] \label{DefiAdjointRuth}
    Let $G$ be a Lie groupoid and $\mathcal{D}$ a connection on $G_1$ with respect to $s$. Then the adjoint (split two-term) representation up to homotopy is defined on the complex of vector bundles
    \[
    E^\bullet = A[2] \xrightarrow{\rho} T G_0[1]
    \]
    by the terms
    \[
    R^\bullet_1 \in \Gamma(G_1, \Hom(s^*E, t^*E))
    \]
    and
    \[
    R^{-1}_2 \in \Gamma(G_2, \Hom(s^*TG_0, t^*A)).
    \]
    
    $R_1$ is given by
    \begin{align*}
        G_1 \times TG_0 & \rightarrow TG_0 \\
        (x, Y) & \mapsto R^{-1}_1(x)(Y) := T t(Z)
    \end{align*}
    where $Z \in \mathcal{D}_x$ denotes the unique element satisfying $Ts(Z) = Y$ and
    \begin{align*}
        G_1 \times A & \rightarrow A \\
        (x, Y) & \mapsto R^{-2}_1(x)(Y) := T \mu (Z, Y, 0_x)
    \end{align*}
    where $Z \in \mathcal{D}_x$ denotes the unique element satisfying $Ts(Z) = \rho(Y)$.

    $R_2$ is given by the basic curvature of $\mathcal{D}$, i.e the map
    \begin{align*}
        G_2 \times TG_0 & \rightarrow A \\
        (x,y, Y) & \mapsto R^{-1}_2(x)(Y) := T\mu(T \mu(V,W)-Z, 0_{(\mu(x,y))^{-1}}) \in (\ker(Ts))_{1_{t(x)}},
    \end{align*}
    where $W \in \mathcal{D}_y$ denotes the unique element satisfying $Ts(W)=Y$ and $V \in \mathcal{D}_x$ denotes the unique element satisfying $Ts(V)=Tt(W)$, as well as $Z \in \mathcal{D}_{\mu(x,y)}$ denotes the unique element satisfying $Ts(Z)=Y$.
\end{defi}

\begin{remark}
    In the reference, the coadjoint representation is defined on the vector bundle
    \[
        E^\bullet = A \xrightarrow{\rho} T G_0[-1]
    \]
    which corresponds to a shift of the algebra $C(G, E)$ by the constant degree 2. All induced constructions are equivalent up to degree shifts.
\end{remark}

\begin{prp}[cf. \cite{Abad08}, Theorem 5.2.2.]
    Any two choices $\mathcal{D}, \mathcal{D}'$ of connections for the construction of the coadjoint representation up to homotopy induce naturally isomorphic chain complexes.
\end{prp}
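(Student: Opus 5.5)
We construct an explicit isomorphism of complexes
\[
\Phi_{\mathcal{D},\mathcal{D}'} \colon \bigl(C^\bullet(G,E), D\bigr) \rightarrow \bigl(C^\bullet(G,E), D'\bigr),
\]
where $D$ and $D'$ are the differentials induced by the structure maps $R_i$ and $R'_i$ of Definition \ref{DefiAdjointRuth}. We take $\Phi$ to be a gauge transformation of representations up to homotopy, i.e.\ a morphism of the form $\Phi = \id + \Phi_1$. Naturality then follows because the construction is canonical in the pair $(\mathcal{D},\mathcal{D}')$ and compatible with composition.

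\textbf{Step 1: the difference tensor.} Two connections $\mathcal{D},\mathcal{D}'$ on $G_1$ with respect to $s$ are complements of $\ker Ts$ in $TG_1$. Their difference is therefore a section
\[
\theta \in \Gamma\bigl(G_1, \Hom(s^*TG_0, \ker Ts)\bigr).
\]
Here $\theta(x)(Y) = Z' - Z$, where $Z \in \mathcal{D}_x$ and $Z' \in \mathcal{D}'_x$ are the lifts of $Y$. Right translation $T\mu(\cdot\,, 0_{x^{-1}})$ identifies $(\ker Ts)_x$ with $A_{t(x)}$, so $\theta$ becomes
\[
\Phi_1 \in \Gamma\bigl(G_1, \Hom(s^*TG_0, t^*A)\bigr).
\]
This has exactly the degree of a component $\Hom(s^*E^\bullet, t^*E^{\bullet-1})$, matching the pattern of $R_i$ with $i=1$ shifted down by one. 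We set $\Phi_0 = \id$ and $\Phi_i = 0$ for $i \geq 2$. The induced map $\Phi$ on $C^\bullet(G,E)$ is the derivation-type operator: it acts by $\id$ plus the cup-type product with $\Phi_1$. Because $\Phi_0 = \id$, the map $\Phi$ is invertible; its inverse is obtained by a finite (nilpotent) series, since $\Phi_1$ strictly raises simplicial degree.

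\textbf{Step 2: $\Phi$ is a chain map.} We verify $D' \circ \Phi = \Phi \circ D$ degree by degree in the simplicial filtration.

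At order $0$ the condition holds trivially, since $R_0 = \rho$ does not depend on the connection.

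At order $1$ we need $R'_1 - R_1 = [\rho, \Phi_1]$. On $TG_0$ this reads
\[
Tt(Z') - Tt(Z) = \rho\bigl(\Phi_1(x)(Y)\bigr),
\]
which is immediate because $Tt$ of a right-translated $s$-vertical vector is its anchor. On $A$, the $R^{-2}_1$ components differ by $T\mu(Z'-Z, Y, 0_x)$, and this equals $\Phi_1(x)(\rho Y)$ after translation.

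At order $2$ we need $R'_2 - R_2 = \delta\Phi_1 + (\text{terms } R_1\Phi_1, \ \Phi_1 R_1)$. This expresses how the basic curvature changes under $\mathcal{D} \mapsto \mathcal{D}'$. It is a computation using multiplicativity of $T\mu$ on the lifts $V,W,Z$ versus $V',W',Z'$, where $V' = V + \theta$ and so on. Expanding $T\mu(V',W') - Z'$ gives the sum of three pieces:
\begin{itemize}
\item $T\mu(V,W) - Z$, which yields $R_2$;
\item $T\mu(\theta_x, 0) - \theta_{xy}$, which yields the face terms $\Phi_1$ on $d_0, d_1$;
\item $T\mu(0,\theta_y)$, which after translation yields the adjoint action of $x$ on $\Phi_1(y)$, i.e.\ an $R_1\Phi_1$-type term.
\end{itemize}
A subtlety is that $V'$ must lift $Tt(W')$ rather than $Tt(W)$. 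This introduces the extra term $\theta_x(\rho\,\Phi_1(y))$, which is exactly the composition $\Phi_1 R_1$-type correction, or equivalently the term quadratic in $\Phi_1$.

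At order $3$ the equation involves $R_2$ composed with $\Phi_1$. It must vanish by degree reasons: $E$ is concentrated in two degrees, so any composition lowering the $E$-degree by two or more is zero.

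\textbf{Step 3: naturality.} We check that $\Phi_{\mathcal{D},\mathcal{D}} = \id$. We also check the cocycle identity $\Phi_{\mathcal{D}',\mathcal{D}''}\circ\Phi_{\mathcal{D},\mathcal{D}'} = \Phi_{\mathcal{D},\mathcal{D}''}$. This holds because the difference tensors add, $\theta_{\mathcal{D}\mathcal{D}''} = \theta_{\mathcal{D}\mathcal{D}'} + \theta_{\mathcal{D}'\mathcal{D}''}$, and the product $\Phi_1'\Phi_1$ vanishes for the same two-term degree reason.

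\textbf{Main obstacle.} We expect the hardest step to be the order-2 identity relating the two basic curvatures, in particular getting signs and the quadratic correction right. We would first carry it out in the local Lie-group-bundle or action-groupoid case to fix the signs, then do the general verification using the same expansion.
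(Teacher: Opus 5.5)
The paper gives no proof of its own here and only cites \cite{Abad08}. Your gauge transformation $\Phi=\id+\Phi_1$, with $\Phi_1$ the right-translated difference of the two horizontal lifts, is the standard argument for this result, and your verification is correct: order~$1$ via $Tt$ and right translation, vanishing for orders $\geq 3$ by the two-term degree count, and the cocycle identity for naturality. The only slip is in your order-$2$ expansion, where the pair $(0_x,\theta_y(Y))$ is not composable because $Tt(\theta_y(Y))=\rho(\Phi_1(y)Y)$ need not vanish. So $\theta_y(Y)$ must be paired with the $\mathcal{D}$-lift at $x$ of $\rho(\Phi_1(y)Y)$, which is what actually produces $R_1(x)\Phi_1(y)$, and the leftover piece $\Phi_1(x)\rho\,\Phi_1(y)$ is absorbed by $R'_1(x)\Phi_1(y)=R_1(x)\Phi_1(y)+\Phi_1(x)\rho\,\Phi_1(y)$, so no quadratic correction survives.
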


One instance where representations up to homotopy appear is the computation of the cohomology of the Bott-Shulman-Stasheff bicomplex.

\begin{defi}[cf. \cite{BSS76}, p. 44]
    The Bott-Shulman-Stasheff bicomplex associated to a simplicial manifold $M_{\bullet}$ (e.g. the nerve $G_\bullet$ of a Lie groupoid $G$) is the bicomplex with vector spaces
    \[
        \BSS^{p,q}(M_{\bullet})=\Omega^p(M_q)
    \]
    as well as the horizontal differential
    \begin{align*}
        d: \BSS^{p,q}(M_{\bullet}) & \rightarrow \BSS^{p+1,q}(M_{\bullet}) \\
        \omega & \mapsto \ddR(\omega)
    \end{align*}
    and the vertical differential
    \begin{align*}
        \delta: \BSS^{p,q}(M_{\bullet}) & \rightarrow \BSS^{p,q+1}(M_{\bullet}) \\
        \omega & \mapsto \sum_{i=0}^{q+1} (-1)^{i+q} (d_i^*).
    \end{align*}
\end{defi}

\begin{remark}
    The sign conventions of this bicomplex and even the choice which index indicates row or column respectively are not standardised.
\end{remark}

This bicomplex is known to compute the cohomology of the differentiable stack associated to a Lie groupoid $G$ (cf. \cite{BX11}, p. 308, 310-311) and therefore a thorough understanding of this object may lead to consequences in the fields of higher differential geometry as well as mathematical physics. As it is difficult to compute in many examples, better methods to determine the cohomology of the Bott-Shulman-Stasheff bicomplex are a topic of ongoing research. Motivation for their possible existence comes from the Cartan model for equivariant cohomology, which actually is a special case of the cohomology of differentiable stacks. The following results by Arias Abad highlight, how representations up to homotopy can be a valuable tool in this setting.

\begin{theorem}[\cite{Abad08}, Theorem 5.3.1.] \label{ThmCoadRuthComputesColumns}
    Let $G$ be a Lie groupoid. Then
    \[
        H^q(\Omega^p(G_{\bullet}), \delta) \cong H^{q+p}(G, S^p(\Ad^*)) = H^{q}(G, S^p(\Ad^*)[p])
    \]
\end{theorem}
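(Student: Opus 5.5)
The idea is to filter the column complex $(\Omega^p(G_\bullet),\delta)$ by how many tangent directions are "vertical" along the simplicial structure, and to identify the associated graded with cochains of $G$ valued in $S^p$ of the coadjoint complex. Concretely, I would choose a connection $\mathcal{D}$ as in Definition \ref{DefiAdjointRuth}. This gives, for every $q$, a splitting of the tangent bundle of the nerve:
\[
TG_q \cong (p^0_q)^*TG_0 \oplus \bigoplus_{i=1}^{q} (p^{i}_q)^*A.
\]
Here $p^{i}_q$ denotes the appropriate vertex/edge maps. One obtains it by writing a point of $G_q$ as a chain of composable arrows $(x_1,\dots,x_q)$ and decomposing a tangent vector into its horizontal lift of the base direction at the source vertex plus $s$-vertical (right-translated) components along each arrow. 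Dualising and taking exterior powers identifies
\[
\Omega^p(G_q) \cong \Gamma\Bigl(G_q,\ \textstyle\bigwedge^p\bigl(T^*G_0 \oplus (A^*)^{\oplus q}\bigr)\Bigr)
\]
as $C^\infty(G_q)$-modules.

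The next step is to transport $\delta$ across this isomorphism and reorganise it. The grading is shifted so that $T^*G_0$ sits in degree $1$ and $A^*$ in degree $2$; this is dual to $E^\bullet = A[2]\to TG_0[1]$, so the symmetric algebra on $E^*$ in the graded sense produces exterior powers on odd generators and symmetric powers on even ones. I would then show that the complex $\bigl(\Gamma(G_\bullet,\bigwedge^p(\dots)),\delta\bigr)$ is quasi-isomorphic to $C^\bullet(G,S^p(\Ad^*))$, with total degree counted as $q+p$ on the right. The quasi-isomorphism should be an Eilenberg--Zilber/Dold--Kan-type comparison. Forms of type $\Lambda^{a}T^*G_0\otimes \Lambda^{b}(\text{vertical})$ on $G_q$ contribute $S^{b}A^*\otimes\Lambda^{a}T^*G_0$ with cochain degree shifted by $b$. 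The extra vertical directions in the copies $(A^*)^{\oplus q}$ play the role of the bar resolution, and the antisymmetric combinations of the $q$ copies collapse, after normalisation, onto symmetric powers on $A^*$ in degree $2$. The formula $S^p(\Ad^*)[p]$ then matches $H^q$ of the column with $H^{q+p}$ of the representation.

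To make this rigorous I would proceed in two stages. First, I would check that the pieces of $\delta$ that do not change the number of vertical factors reproduce the structure maps $R_1$. Second, I would check that the pieces mixing horizontal and vertical components reproduce $R_0=\rho^*$ and the basic curvature $R_2$. The curvature appears because horizontal lifts of composites differ from composites of horizontal lifts, and that defect is exactly the expression defining $R^{-1}_2$. I would then build a homotopy retraction onto the normalised subcomplex using the simplicial degeneracies. Homological perturbation finally shows that the transferred differential is exactly the one induced on $S^p$ of the adjoint representation up to homotopy. Higher perturbation terms should vanish for degree reasons, since $E$ is two-term and $R_i=0$ for $i\ge 3$.

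I expect the main obstacle to be the perturbation step: verifying that the transferred differential contains no extra higher terms, and that the signs of $\delta$ match those of the symmetric power construction. An independent check would be to note, using the earlier Proposition, that the result is independent of $\mathcal{D}$. The statement can also be tested on a Lie group $G_0=\mathrm{pt}$, where it reduces to Bott's classical $H^q(\Omega^p(G_\bullet))\cong H^{q-p}(G,S^p\mathfrak g^*)$.
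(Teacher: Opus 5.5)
Your route differs from the paper's, and as written it has a gap exactly where the content of the theorem lies. After your splitting, the number of copies of $A^*$ in $\wedge^p(T^*G_0\oplus(A^*)^{\oplus q})$ is tied to the nerve degree $q$. The comparison you describe, however, must send the part with $b$ vertical factors on $G_q$ to cochains on $G_{q-b}$ with values in $\wedge^{p-b}T^*G_0\otimes S^bA^*$. That is a map which lowers simplicial degree by $b$ and symmetrises, and nothing in your plan constructs it. A retraction onto normalised cochains built from degeneracies only cuts down to cochains killed by the codegeneracies. It neither shifts degrees nor turns $\wedge^b((A^*)^{\oplus q})$ into $S^bA^*$.

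The d\'ecalage fact you are appealing to says that $\wedge^b$ of the cosimplicial space $[q]\mapsto V^{\oplus q}$ has cohomology $S^bV$ in degree $b$. It concerns a cosimplicial vector space with constant structure maps. On $G_\bullet$ the cofaces act on the fibres through base-point dependent data ($R_1$, $\rho$ and the curvature $R_2$), and one index plays both the nerve role and the linear role. So there is no second simplicial direction in which Eilenberg--Zilber could be applied. Even for a Lie group this step is normally done on $EG_\bullet=G^{\bullet+1}$, where left trivialisation makes the fibrewise cosimplicial structure constant, rather than on $NG_\bullet$, where the faces involve $\mathrm{Ad}$.

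The perturbation step does not repair this. You say higher terms vanish for degree reasons because $E$ is two-term with $R_i=0$ for $i\geq 3$, but this does not transfer to $S^p(\Ad^*)$. Its underlying complex occupies the $p+1$ degrees $p,\ldots,2p$, so degree alone does not exclude structure operators $R_k$ for $k\leq p+1$. You would also have to specify which structure on $S^p$ you are matching, since $C(G)$ is not graded commutative. So identifying the transferred differential, which you rightly flag as the main obstacle, is not a bookkeeping check; it is the theorem.

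The paper, recalling Arias Abad--Crainic, supplies exactly the missing second direction by passing to the groupoid analogue of $EG$. It uses the action groupoids $G^{(k)}=G\ltimes G_{k+1}$, with $G^{(k)}_n=G_{k+n+1}$, and builds an augmented bicomplex:
\begin{itemize}
\item the $k$-th column is $C(G^{(k)},S^p(\Ad^*)[p])$ with the representation-up-to-homotopy differential $D$;
\item the horizontal maps $\flat^*$ are alternating sums of the face maps $d_j$ with $j>0$;
\item it is augmented on the left by $C(G,S^p(\Ad^*)[p])$ via $\flat^*_0$, and from below by $(\Omega^p(G_\bullet),\delta)$ via $d_0^*$.
\end{itemize}
The rows are exact, since they come from the d\'ecalage $G_{\bullet+1}$, which has an extra degeneracy. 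Because $G$ acts freely and properly on $G_{k+1}$ with quotient map $d_0\colon G_{k+1}\to G_k$, the groupoid $G^{(k)}$ is Morita equivalent to the manifold $G_k$. Hence the $k$-th column resolves $\Omega^p(G_k)$, and both augmentations are quasi-isomorphic to the total complex.

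This needs no splitting, no explicit d\'ecalage combinatorics and no perturbation theory. The only local input is the unit groupoid, where $S^p(\Ad^*)[p]$ is just $\wedge^pT^*$ in degree $0$. If you want to keep your explicit Bott-style computation, you would first have to build such a bisimplicial model and then run your linear-algebra argument in the extra direction.
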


\begin{theorem}[\cite{Abad08}, Theorem 5.3.2.] \label{ThmBottSSAriasAbad}
    Let $G$ be a Lie groupoid. There is a spectral sequence converging to the cohomology of BG:
    \[
    \mathcal{E}^{p,q}_1 = H^{p+q}(G, S^q(\Ad^*)) \Rightarrow H^{p+q}(\BSS(G)).
    \]
\end{theorem}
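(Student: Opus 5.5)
The plan is to read the spectral sequence off the Bott-Shulman-Stasheff bicomplex by filtering by form degree, and to identify its first page with Theorem \ref{ThmCoadRuthComputesColumns}. First I fix conventions. The total complex $\Tot(\BSS(G_\bullet))$ has total degree $p+q$ on $\Omega^p(G_q)$, with total differential $d+\delta$; the signs in $\delta$ are chosen so that the two differentials anticommute. By \cite{BX11}, the cohomology of this total complex is $H^\bullet(\BSS(G))$, which models the cohomology of $BG$, so that is the target of convergence.

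Next I filter the total complex by de Rham degree, setting $F^k = \bigoplus_{p\geq k} \Omega^p(G_\bullet)$. This is a decreasing filtration preserved by $d+\delta$. The associated graded pieces are the columns $(\Omega^p(G_\bullet),\delta)$, and the zeroth page differential is $\delta$. Hence the first page is
\[
E_1^{p,q} \cong H^q(\Omega^p(G_\bullet),\delta) \cong H^{q+p}(G, S^p(\Ad^*))
\]
by Theorem \ref{ThmCoadRuthComputesColumns}. The statement indexes the page by $\mathcal{E}^{p,q}_1$ with $S^q$, which corresponds to the same object after renaming indices: the form degree is the symmetric power and the total degree is $p+q$. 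So I relabel so that the filtration index equals the symmetric power $S^q$ and the complementary index gives the total degree $p+q$, and then check that $H^{p+q}(G,S^q(\Ad^*))$ sits in total degree $p+q$ consistently with the shift $[q]$.

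Convergence is the step I expect to need the most care, because the bicomplex is unbounded in the simplicial direction. On the other hand, the form degree $p$ on $G_q$ is bounded by $\dim G_q$, and these dimensions grow with $q$, so the filtration is not bounded in a uniform way. However, each total degree $n$ involves only the finitely many pairs $(p,q)$ with $p+q=n$ and $p,q\geq 0$. The filtration is therefore finite in each total degree: $F^0=\Tot^n$ and $F^{n+1}\cap \Tot^n=0$. Standard results for bounded filtrations then give strong convergence to $H^{p+q}(\Tot)=H^{p+q}(\BSS(G))$.

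The remaining subtlety is to make sure that the isomorphism in Theorem \ref{ThmCoadRuthComputesColumns} is really the $E_1$ term, that is, that it computes the cohomology of the column with respect to $\delta$ with the sign conventions of the BSS bicomplex used here. This comes down to noting that changing signs by $(-1)^q$ does not change the cohomology.
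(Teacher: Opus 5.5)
Your proposal is correct and follows the same route the paper relies on for this cited result: the spectral sequence of the filtration of the first-quadrant Bott--Shulman--Stasheff bicomplex by form degree, i.e.\ by its columns $(\Omega^q(G_\bullet),\delta)$. After swapping the roles of $p$ and $q$, its $E_1$ term is identified with $H^{p+q}(G,S^q(\Ad^*))$ by Theorem \ref{ThmCoadRuthComputesColumns}, and convergence is automatic because the filtration is finite in each total degree. Two small remarks: the comment about $\dim G_q$ is unnecessary, and with the paper's $\delta$ the two differentials actually commute, so you need a sign twist in the total differential, which does not change the column cohomology.
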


This theorem has various implications, for example on the degrees of the non-vanishing cohomology groups and yields computations for low values of $p$. However, in special cases it becomes significantly more computable. We will in the following set up an example of this in the case of a regular and proper groupoid.

\begin{defi}[cf. \cite{Ob26}, Definition 2.23]
    A strict representation up to homotopy is a complex of representations, i.e. a differential graded vector bundle together with an honest representation of $G$ on each degree, such that the differential is $G$-equivariant. It can be equivalently described as a representation up to homotopy where $R_i=0$ for all $i>1$.
\end{defi}

\begin{example} \label{ExAisoStrictRuth}
    Let $G$ be a regular Lie groupoid, i.e. a Lie groupoid such that its orbit foliation $\mathcal{F} =\im(\rho) \subset TG_0$ is of constant rank. Then the restriction of the map $R^{-2}_1$ of Definition \ref{DefiAdjointRuth} to $A_{\iso} = \ker(\rho)$ can equivalently be written as
    \begin{align*}
        G_1 \times A_{\iso} & \rightarrow A_{\iso} \\
        (x, Y) & \mapsto R^{-2}_1(x)(Y) := T \mu (0_x, Y, 0_x)
    \end{align*}
    and defines a honest $G$-representation. Similarly, the map $R^{-1}_1$ of Definition \ref{DefiAdjointRuth} describing the pseudo-action on $T G_0$ descends to $\nu=T G_0 / \mathcal{F}_0$ as a honest $G$-representation, the normal representation, which is independent of the choice of $\mathcal{D}$ (cf. \cite{dHo12}, Section 3.4).
\end{example}

This example is just one instance of a general phenomenon for regular representations up to homotopy (i.e. where the differential of the graded vector bundle has constant rank in each degree): 

\begin{theorem}[\cite{AC13}, Theorem 3.32.] \label{ThmCohomologyRegularRuths}
    Let $E$ be a unital representation up to homotopy of a Lie groupoid $G$ whose underlying complex is regular. Then each cohomology vector bundle $\mathcal{H}^k(E)$ has the structure of an ordinary representation of $G$ with the action defined by
    \[
        \lambda_g([v]) := [\lambda_g(v)],
    \]
    and there is a spectral sequence:
    \[
        \mathcal{E}_2^{p,q} \cong H^p(G, \mathcal{H}^q(E)) \Rightarrow H^{p+q}(G, E).
    \]
    Moreover, the action on the $\mathcal{H}^k(E)$’s can be extended to a representation up to homotopy structure in the cohomology complex $\mathcal{H}^\bullet(E)$, with zero differential, for which there is a quasi-isomorphism:
    \[
        \Phi: C(G, E) \rightarrow C(G, \mathcal{H}(E)).
    \]
\end{theorem}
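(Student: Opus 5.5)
I would prove Theorem \ref{ThmCohomologyRegularRuths} by filtering $C(G,E)$ by simplicial degree, then splitting the regular complex $E$ into cohomology plus an acyclic part, and transferring the structure by homological perturbation.

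\textbf{Step 1: the induced action.} Unitality gives $R_1|_{G_0}=\const_{\id}$. The component of the total differential of bidegree $(1,0)$ in $C^\bullet(G,E)=\Gamma(G_\bullet,(p^0_\bullet)^*E)$ is built from $R_1$. The $(0,1)$ component is $R_0=\partial$, the differential of $E$. Squaring the total differential and reading off bidegree $(1,1)$ gives
\[
R_1(g)\circ\partial=\partial\circ R_1(g).
\]
So $R_1(g)$ preserves $\ker\partial$ and $\im\partial$ fibrewise, and $\lambda_g([v]):=[R_1(g)v]$ is well defined. Regularity makes $\ker\partial$ and $\im\partial$ smooth subbundles, hence $\mathcal{H}^k(E)$ is a smooth vector bundle. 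The bidegree $(2,0)$ component of the square gives
\[
R_1(gh)-R_1(g)R_1(h)=\pm(\partial R_2+R_2\partial).
\]
Hence $\lambda$ is multiplicative on cohomology, and unitality gives $\lambda_{1_x}=\id$. Thus $\mathcal{H}^k(E)$ is an honest representation.

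\textbf{Step 2: the spectral sequence.} Filter $C(G,E)$ by simplicial degree $p$. The $E_0$ page is $\Gamma(G_p,(p^0_p)^*E^\bullet)$ with differential $\partial$. Since $\partial$ has constant rank, taking sections commutes with taking cohomology: the kernel and image subbundles are split, and pullbacks are exact. This gives
\[
E_1^{p,q}=\Gamma\bigl(G_p,(p^0_p)^*\mathcal{H}^q(E)\bigr)=C^p(G,\mathcal{H}^q(E)).
\]
The $d_1$ differential is induced by $R_1$, which by Step 1 is the simplicial differential of the representation $\lambda$. Therefore $E_2^{p,q}\cong H^p(G,\mathcal{H}^q(E))$. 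The filtration is bounded in each total degree, because $E$ is a finite graded bundle and $p\ge 0$, so the spectral sequence converges.

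\textbf{Step 3: the transferred structure and $\Phi$.} Choose a metric, or any splittings, to write
\[
E^k=\mathcal{H}^k\oplus B^k\oplus C^k, \qquad \partial: C^k\xrightarrow{\ \cong\ } B^{k+1}.
\]
This yields bundle maps $i:\mathcal{H}\to E$, $\pi:E\to\mathcal{H}$, and a homotopy $h$ with
\[
\id-i\pi=\partial h+h\partial,\qquad h^2=0,\quad \pi h=0,\quad hi=0.
\]
These are a strong deformation retract of $(C(G,E),\partial)$ onto $(C(G,\mathcal{H}),0)$, extended $C(G)$-linearly. Treat $\delta=\sum_{i\ge1}R_i$ as a perturbation. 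The homological perturbation lemma produces a perturbed differential on $C(G,\mathcal{H})$ of the form
\[
\sum_n \pi\,\delta\,(h\delta)^n\, i,
\]
together with perturbed maps $\Phi=\sum_n\pi(\delta h)^n$ and $i'$ that are quasi-inverse chain maps.

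The main obstacle is showing that these formulas are well defined and of the right form. Each application of $\delta h$ raises simplicial degree by at least one, so the series is finite in each simplicial degree. The resulting operator must also be a derivation of the $C(G)$-module, i.e. of the form $\sum_i R'_i$ with $R'_i$ acting through the cup product on $G_i$. For this I would check that $\pi,i,h$ are $C(G)$-linear of simplicial degree zero. Then each composite is again a sum of cup-type operators on the nerve, and the $R'_i$ can be read off from the composites of the $R_j$ with $h$. One must additionally verify three things. First, $R'_0=0$. Second, $R'_1$ agrees with $\lambda$, which follows from $\pi R_1 i$ and the fact that $h$ is absent at first order. Third, unitality survives when the splitting is chosen compatibly with degeneracies; $R'_i$ then vanishes on degenerate simplices since the $R_j$ do for $j>1$. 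Finally, $\Phi$ is a filtered quasi-isomorphism, because on associated graded pages it induces the identity on $E_1$. This recovers Step 2 as the spectral sequence of $C(G,\mathcal{H}(E))$.
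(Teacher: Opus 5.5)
Your proposal is correct and follows essentially the same route as the argument the paper takes from \cite{AC13}, which it invokes in the proof of Theorem \ref{ThmMainTwoIsosAgree} ("defined using the perturbation Lemma", with initiator $D-D^0$ and a splitting homotopy): you filter $C(G,E)$ by simplicial degree to get $\mathcal{E}_2^{p,q}\cong H^p(G,\mathcal{H}^q(E))$, and you obtain the structure on $\mathcal{H}(E)$ and the map $\Phi$ by applying the homological perturbation lemma to a splitting of the regular complex. One small correction: the Leibniz rule and unitality of the transferred $R'_k$ do not follow from $C(G)$-linearity alone or from choosing the splitting "compatibly with degeneracies". They follow from the side conditions $h^2=0$, $\pi h=0$, $hi=0$, together with $h$ commuting with the face maps $d_j$ for $j\geq 1$; these force the transferred differential to be the face part plus terms $\pi R_{k_1} h R_{k_2}\cdots h R_{k_m} i$, and any such term on a degenerate simplex vanishes because $R_1(1_x)=\id$ ends up sandwiched as $\pi h$, $h^2$ or $hi$.
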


In \cite{AC13} it is already noted that this means that if $G$ is proper, the cohomology of both $C(G, \mathcal{H}(E))$ and $C(G, E)$ can be expressed using invariant sections from $G_0$ to $\mathcal{H}(E)$ (cf. \cite{AC13}, Corollary 3.34.). A part of the proof can be generalised from $\mathcal{H}(E)$ to the following setting:

\begin{lemma} \label{LemmaCohomStrictRuths}
Let $G$ be a proper Lie groupoid with a strict representation up to homotopy on the dg-vector bundle $E$. Then
\[
H(G, E) \cong H \left( \ldots \rightarrow \Gamma(G_0, E^0)^G \rightarrow \Gamma(G_0, E^1)^G \rightarrow \Gamma(G_0, E^2)^G \rightarrow \ldots \right)
\]
is an isomorphism of cohomology groups.
\end{lemma}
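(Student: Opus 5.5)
For a strict representation up to homotopy, the complex $C^\bullet(G,E)$ is the total complex of a double complex. One direction is the internal differential $\partial$ of $E$, applied pointwise. The other is the simplicial (groupoid cohomology) differential $\delta$ with coefficients in each honest representation $E^k$. The plan is to filter this double complex by the $E$-degree and use properness to collapse the $\delta$-direction.

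\textbf{Step 1: the double complex.} Since $R_i = 0$ for $i>1$, the differential on $C(G,E)$ is $D = \partial + \delta$. Here $\partial = R_0$ acts pointwise on $\Gamma(G_p, (p^0_p)^*E^k)$. The operator $\delta$ is the usual differential of the complex $C^p(G,E^k) = \Gamma(G_p,(p^0_p)^*E^k)$, built from the face maps and the action $R_1$ on $E^k$. The relation $D^2=0$ splits by bidegree into $\partial^2=0$, $\delta^2=0$ and $\partial\delta+\delta\partial=0$, after the usual sign conventions. The anticommutation is precisely the $G$-equivariance of $\partial$. So $C(G,E) = \Tot(C^{p}(G,E^k))$, a double complex with $p\geq 0$.

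\textbf{Step 2: properness kills higher cohomology.} For a proper Lie groupoid and any honest representation $V$, the cohomology vanishes in positive degrees, $H^p(G,V)=0$ for $p>0$, and $H^0(G,V)=\Gamma(G_0,V)^G$. This is Crainic's vanishing theorem. The proof averages over a proper Haar system with cutoff function, which gives an explicit contracting homotopy $h$ of $C^\bullet(G,V)$ in positive degrees. I will quote this, for example via the same reference used in \cite{AC13}, Corollary 3.34.

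\textbf{Step 3: the spectral sequence.} Filter the total complex by the $E$-degree $k$, equivalently take the spectral sequence whose first page computes $\delta$-cohomology. Step 2 gives $\mathcal{E}_1^{k,p}=H^p(G,E^k)$, which vanishes for $p>0$ and equals $\Gamma(G_0,E^k)^G$ for $p=0$. The $d_1$ differential is induced by $\partial$, so the first page is the complex $\Gamma(G_0,E^\bullet)^G$ from the statement, concentrated in a single row. The sequence therefore collapses at $\mathcal{E}_2$, and since it is concentrated in one row there are no extension problems. Equivalently, the inclusion $\Gamma(G_0,E^\bullet)^G \hookrightarrow C(G,E)$ as degree-zero cochains is a chain map, because invariance means $\delta s=0$ and $\partial$ preserves invariant sections by equivariance. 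It is a quasi-isomorphism because each column $C^\bullet(G,E^k)$ is a resolution of $\Gamma(G_0,E^k)^G$.

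\textbf{Main obstacle: convergence.} If $E$ is unbounded, the filtration by $k$ need not be bounded, and convergence of the spectral sequence has to be justified. I would avoid the issue by choosing the homotopies canonically. The averaging homotopy $h$ from Step 2 is built from the Haar system and the representation $R_1$, and it commutes with $\partial$ because $\partial$ is $G$-equivariant and fibrewise linear. Then the standard homological perturbation lemma, or a direct "staircase" argument using $\partial h = h\partial$, gives the quasi-isomorphism degree by degree. A fixed total degree only involves $p\geq 0$, so every staircase terminates after finitely many steps. The remaining work is to verify the commutation $\partial h = h \partial$ explicitly from the formula for $h$.
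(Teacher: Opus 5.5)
Your proof is correct and is essentially the paper's argument. You filter $C(G,E)$ by the $E$-degree, use Crainic's vanishing theorem for proper groupoids so that the first page is the single row $\Gamma(G_0,E^\bullet)^G$ with $d_1$ induced by the equivariant differential of $E$, and conclude by collapse. The convergence issue you flag is not a real obstacle: in total degree $n$ only $E^k$ with $k\le n$ occur, so the filtration is exhaustive and bounded on one side, and it is finite when $E$ is bounded, as for $S^p(\Ad^*)$. Your remark that the averaging homotopy commutes with $\partial$ is nonetheless a valid and more explicit alternative, giving a direct deformation retraction of $C(G,E)$ onto the invariant sections.
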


\begin{proof}
    We can filter the representation up to homotopy by the degree of $E$. As the representation up to homotopy is strict, this filtration is compatible with the differential on $C(G, E)$. If we consider the quotients, we obtain
    \[
    F_q/F_{q-1} = C(G, E^q).
    \]
    Looking at the associated spectral sequence, we have that the first page is therefore given in bidegrees $(p,q)$ by
    \[
    H^p(C, E^q)= \begin{cases}
        0 & \text{if } p \neq 0 \\
        \Gamma(G_0, E^q)^G & \text{otherwise}
    \end{cases}
    \]
    as $G$ is proper (cf. \cite{Cr03}, Proposition 1). For degree reasons, the spectral sequence collapses on the second page, such that the cohomology groups of the first page together with the differential compute the total cohomology. However, the cohomology groups $\Gamma(G_0, E^q)^G$ have a canonical inclusion map into $\Gamma(G_0, E^q) \subset C(G, E)$. The first page differential is induced by the component of the differential on $\Gamma(G_0, E^q)$ with target $\Gamma(G_0, E^{q+1})$. It is explicitly given by applying the differential on $E$ and therefore restricts to the invariant sections due to equivariance (i.e. the strictness assumption).
\end{proof}

\begin{example} \label{ExCohomStrictRuths}
    Let $G$ be a regular groupoid. Consider the strict representation up to homotopy as defined in Example \ref{ExAisoStrictRuth} on $B = A_{\iso} \rightarrow \nu$. Then we can use Lemma \ref{LemmaCohomStrictRuths} to compute
    \[
    H(G_0, B)=\Gamma(G_0, A_{\iso})^G[2] \oplus \Gamma(G_0, \nu)^G[1]
    \]
    as well as
    \[
    H(G, S^p(B^*))= \bigoplus_{a+b=p} \Gamma(G_0, \wedge^a \nu^* \otimes S^b(A^*_{\iso}))^G[-(a+2b)].
    \]
\end{example}

\begin{cor} \label{CorColumnsViaRedRuth}
    Let $G$ be a proper and regular groupoid. Then we have that
    \begin{align*}
        H^q(\Omega^p(G_\bullet), \delta) & \cong H^{q+p}(G, S^p(\Ad^*)) \\ & \cong H^{q+p}(G, S^p(B^*)) \cong \Gamma(G_0, \wedge^{p-q} \nu^* \otimes S^q(A^*_{\iso}))^G.
    \end{align*}
\end{cor}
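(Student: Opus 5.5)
The chain of isomorphisms in Corollary \ref{CorColumnsViaRedRuth} will be assembled link by link. The first isomorphism is exactly Theorem \ref{ThmCoadRuthComputesColumns}, which holds for any Lie groupoid, so nothing needs to be done there. The last isomorphism should come from Example \ref{ExCohomStrictRuths}: in total degree $q+p$ we need the summands with $a+2b = q+p$ and $a+b=p$. These give $b=q$ and $a=p-q$, so
\[
H^{q+p}(G, S^p(B^*)) \cong \Gamma(G_0, \wedge^{p-q}\nu^*\otimes S^q(A^*_{\iso}))^G .
\]
This uses properness through Lemma \ref{LemmaCohomStrictRuths}. I will also check that the strict representation on $S^p(B^*)$ has zero differential, since $B$ is $A_{\iso}\to\nu$ with the zero map. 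This is what makes the complex of invariant sections in Lemma \ref{LemmaCohomStrictRuths} have vanishing differential, so its cohomology is the direct sum of the invariant sections. The grading has to be tracked so that $\nu$ sits in degree $-1$ and $A_{\iso}$ in degree $-2$. After dualizing and taking $S^p$ of a graded bundle, the odd part $\nu^*$ then contributes $\wedge$ and the even part $A_{\iso}^*$ contributes $S$, as Example \ref{ExCohomStrictRuths} already records.

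The middle isomorphism $H(G,S^p(\Ad^*))\cong H(G,S^p(B^*))$ is where the work lies. Since $G$ is regular, $\rho$ has constant rank. So the complex $\Ad = (A[2]\xrightarrow{\rho}TG_0[1])$ is regular, with cohomology bundles $\mathcal{H}(\Ad)= A_{\iso}[2]\oplus\nu[1] = B$. I plan to apply Theorem \ref{ThmCohomologyRegularRuths} to $S^p(\Ad^*)$, first checking that this is a unital representation up to homotopy; for this I choose $\mathcal{D}$ so that the identity arrows are horizontal. Its underlying complex is $S^p$ of the dual complex $\nu^*\oplus \mathcal{F}^* \to\ldots$. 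That complex is again regular, and by a Künneth-type argument on graded symmetric powers (splitting $TG_0\cong\mathcal{F}\oplus\nu$ and $A\cong A_{\iso}\oplus\mathcal{F}$ pointwise, so that the acyclic part $\mathcal{F}\xrightarrow{\id}\mathcal{F}$ drops out in characteristic zero) its cohomology bundle is $S^p(B^*)$. Theorem \ref{ThmCohomologyRegularRuths} then yields a quasi-isomorphism $C(G,S^p(\Ad^*))\to C(G,\mathcal{H}(S^p(\Ad^*)))$. The induced action on cohomology is $\lambda_g[v]=[\lambda_g v]$, which by Example \ref{ExAisoStrictRuth} coincides with the strict normal and isotropy representations.

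The main obstacle is that Theorem \ref{ThmCohomologyRegularRuths} only gives a representation up to homotopy on $\mathcal{H}$, possibly with higher terms, rather than the strict representation on $S^p(B^*)$. To handle this I would avoid identifying the structures exactly. Instead I would use the spectral sequence $\mathcal{E}_2^{p',q'}=H^{p'}(G,\mathcal{H}^{q'})$ of that theorem. By properness (\cite{Cr03}, Proposition 1) this is concentrated in $p'=0$, so it collapses and
\[
H^{n}(G,S^p(\Ad^*))\cong\Gamma(G_0,\mathcal{H}^{n}(S^p(\Ad^*)))^G.
\]
Since the ordinary representation on $\mathcal{H}^n$ agrees with the one on the corresponding summand of $S^p(B^*)$, this matches the computation of Example \ref{ExCohomStrictRuths} directly, and so establishes the middle isomorphism.
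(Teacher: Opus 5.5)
Your argument is correct and has the same skeleton as the paper's. The first link is Theorem \ref{ThmCoadRuthComputesColumns}. The last link is Example \ref{ExCohomStrictRuths}, with exactly your degree count ($a+2b=q+p$, $a+b=p$, hence $b=q$, $a=p-q$). The middle link rests on Theorem \ref{ThmCohomologyRegularRuths}.

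Where you differ is in which half of that theorem you use. The paper invokes the quasi-isomorphism $\Phi\colon C(G,S^p(\Ad^*))\to C(G,\mathcal{H}(S^p(\Ad^*)))$ and then identifies $\mathcal{H}(S^p(\Ad^*))$ with $S^p(B^*)$. Read literally, this requires the transferred representation up to homotopy on $\mathcal{H}$ to coincide with the strict one on $S^p(B^*)$. For an arbitrary $\mathcal{D}$ the transferred structure can have nonzero higher components; for a bundle of Lie groups, for instance, the transferred $R_2$ is just the basic curvature. The paper only settles this later, in the proof of Theorem \ref{ThmMainTwoIsosAgree}. There it chooses $\mathcal{D}$ inside a multiplicative Ehresmann connection and preserving $\nu$, so that the transfer data is the plain inclusion $B^*\subset\Ad^*$.

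You instead use the spectral sequence $H^{p'}(G,\mathcal{H}^{q'})\Rightarrow H(G,S^p(\Ad^*))$ together with properness, which is essentially \cite{AC13}, Corollary 3.34. Only the ordinary representations $\lambda_g[v]=[\lambda_g v]$ on the individual $\mathcal{H}^n$ then enter. You obtain the middle isomorphism as the composite of two identifications with the same space of invariant sections.

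For the corollary as stated, your route is more economical and more robust. Your remarks on unitality and on the K\"unneth computation of $\mathcal{H}(S^p(\Ad^*))$ also fill in steps the paper leaves implicit. What your route gives up is a chain-level comparison between $C(G,S^p(\Ad^*))$ and $C(G,S^p(B^*))$ inducing the isomorphism. The paper needs exactly that afterwards, to match classes with the $J^E G$-isotropy Cartan forms in Theorem \ref{ThmMainTwoIsosAgree}.
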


\begin{proof}
    The first isomorphism is by Theorem \ref{ThmCoadRuthComputesColumns}, the second by Theorem \ref{ThmCohomologyRegularRuths} as $\mathcal{H}(S^p(\Ad^*))=S^p(B^*)$ and the third as in Example \ref{ExCohomStrictRuths}.
\end{proof}

\begin{example} \label{ExTransitiveGrpdBSSViaRuth}
    Consider a proper transitive Lie groupoid $G$. It is automatically regular and satisfies $\nu=0$, such that we obtain
    \[
        H(\Omega^p(G_\bullet), \delta) = \Gamma(G_0, S^p(A^*_{\iso}))^G[-p].
    \]
    This cohomology is only contained in degree $p$ and we immediately obtain that the associated spectral sequence of Theorem \ref{ThmBottSSAriasAbad} collapses at the first page. We may furthermore note that the restriction to a point is an isomorphism on this space, which corresponds to the fact that a transitive groupoid is Morita equivalent to a Lie group.
\end{example}

\section{The isotropy Cartan model}

These results are closely connected to the recent advances on computing differentiable stack cohomology using multiplicative Ehresmann connections.

\begin{theorem}[\cite{KT25}, Theorem 5.2.31] \label{ThmThesisMainThm}
    Let $G$ be a proper and regular groupoid. Then
    \[
    H(\Tot(\BSS(G)))=H( \bigoplus_{a+2b=\bullet} \Gamma(G_0, \wedge^a \nu^* \otimes S^b(A^*_{\iso}))^G, \dCart ).
    \]
\end{theorem}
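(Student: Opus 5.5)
The plan is to compute the total cohomology through the column spectral sequence of Theorem \ref{ThmBottSSAriasAbad}, and then to realise its first page inside $\BSS(G)$ using a multiplicative Ehresmann connection. That realisation gives an explicit sub-bicomplex on which the vertical differential vanishes, which forces collapse. Since $G$ is proper and regular, it admits a multiplicative Ehresmann connection $E \subset TG_1$ (Fernandes--Marcut \cite{FM23}), i.e.\ a multiplicative distribution complementary to $\ker(Ts)\cap\ker(Tt)$ along the isotropy directions. I will also use that properness gives vanishing of higher differentiable cohomology with coefficients in honest representations (\cite{Cr03}, Proposition 1), as in Lemma \ref{LemmaCohomStrictRuths}.

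\textbf{Step 1 (first page).} Filter $\Tot(\BSS(G))$ by form degree $p$. The $E_1$-page is $H^q(\Omega^p(G_\bullet),\delta)$. By Corollary \ref{CorColumnsViaRedRuth}, this is $\Gamma(G_0,\wedge^{p-q}\nu^*\otimes S^q(A^*_{\iso}))^G$. Writing $a=p-q$ and $b=q$, the total degree is $p+q=a+2b$, which matches the grading in the statement.

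\textbf{Step 2 (explicit cocycle representatives).} For each invariant section $\omega\in\Gamma(G_0,\wedge^a\nu^*\otimes S^b(A^*_{\iso}))^G$, I build an explicit $\delta$-cocycle in $\Omega^{a+b}(G_b)$.
\begin{itemize}
\item The $\nu^*$-factors are pulled back along the base projection.
\item The $A^*_{\iso}$-factors are evaluated on the connection-curvature components (Chern--Weil style) obtained from the Maurer--Cartan-type forms $\theta\in\Omega^1(G_1,t^*A_{\iso})$ determined by $E$.
\end{itemize}
Multiplicativity of $E$ should make this assignment compatible with the face maps. This produces a bigraded subspace $C^{a,b}\subset\BSS$ which is closed under $\delta$ (indeed $\delta$ vanishes on it) and closed under $d$ up to terms that stay in $C$. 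The resulting induced differential on $C$ is, by definition, $\dCart$. I expect it to be the "isotropy Cartan" differential: the de Rham-type differential along the normal directions, plus a contraction-type term coming from the curvature of $E$ restricted to $\nu$, which raises $b$ by one and lowers $a$ by one.

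\textbf{Step 3 (comparison).} The inclusion $(C,\dCart)\hookrightarrow\Tot(\BSS(G))$ respects the column filtration. On the first pages it induces the isomorphism of Step 1, provided one checks that the constructed classes match Arias Abad's identification through the coadjoint representation up to homotopy $B=A_{\iso}\to\nu$. By the comparison theorem for bounded filtered complexes, the inclusion is then a quasi-isomorphism, which gives the theorem.

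\textbf{Main obstacle.} The hardest part is Step 2: showing that $C$ is closed under $d$, and identifying the first-page classes correctly. This requires $G$-invariance in the strong sense, namely invariance under the jet-type action of $J^EG$ rather than only under the honest $G$-action. I would first handle this by averaging with a Haar system to replace $G$-invariant sections by $J^EG$-invariant representatives. I would then verify the face-map compatibility locally, using the fact that multiplicative connections of proper regular groupoids are locally modelled on $\nu$-flat products.
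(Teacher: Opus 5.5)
\textbf{The gap is in Step 2, and Step 3 depends on it.} The space $C$ of isotropy Cartan forms is closed under $\delta$, with $\delta|_C=0$ (Theorem \ref{ThmThesisIsoCartInclusionToColumns}). It is \emph{not} closed under the de Rham differential $d$. So it is not a sub-bicomplex of $\BSS(G)$, and there is no inclusion of complexes $(C,\dCart)\hookrightarrow\Tot(\BSS(G))$ to which the comparison theorem could apply.

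This already fails for $G=SU(2)$ over a point, where $\nu=0$ and $E=0$ is trivially flat. The form $\omega=\langle \pr_1^*\theta^L\wedge\pr_2^*\theta^R\rangle\in\Omega^2(G_2)$ is the isotropy Cartan form of the Killing form in $S^2(\mathfrak{su}(2)^*)^{SU(2)}$, and $\delta\omega=0$. However, up to sign and normalisation, $d\omega=\delta H\neq 0$, where $H=\tfrac16\langle\theta^L,[\theta^L,\theta^L]\rangle\in\Omega^3(G_1)$ is the Cartan $3$-form. Since $C\cap\Omega^3(G_2)=C^{1,2}=0$, we get $d\omega\notin C$. The total cocycle representing the generator of $H^4(BSU(2))$ therefore needs the correction term $H$.

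In general, for $\omega\in C^{a,b}$ the form $d\omega$ has three components by $E$-vertical degree:
\begin{itemize}
\item degree $b$, which is $\dCart$ (cf.\ Proposition \ref{PrpThesisDefisDCart});
\item degree $b+1$, coming from the isotropy bracket, as in the example;
\item degree $b-1$, coming from the curvature of $E$, which in general cannot be made to vanish.
\end{itemize}
Only the first lies in $C$.

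For the same reason, your guess that $\dCart$ has a component $(a,b)\mapsto(a-1,b+1)$ cannot be right. The differential $d$ preserves the simplicial degree $b$ and $\delta|_C=0$, so nothing induced on $C$ can change $b$. In fact $\dCart$ is the $E_1$-differential $(a,b)\mapsto(a+1,b)$, namely the exterior covariant derivative for the connection on $A_{\iso}$ induced by $E$.

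What the theorem actually requires is that the column spectral sequence degenerates at $E_2$. Every $\dCart$-closed element must be extended to a total cocycle by a zig-zag of corrections in $\Omega^{p+k}(G_{q-k})$, $k\geq 1$, and all higher differentials $d_r$, $r\geq 2$, must vanish. For a compact group this holds for degree reasons, since $E_1$ is concentrated in $p=q$. As soon as $\nu\neq 0$ there is no such degree argument. The paper identifies exactly this as the substance of the proof in \cite{KT25}. That proof needs an explicit analysis of the column spectral sequence and of the homotopies in the column quasi-isomorphisms. It also needs a multiplicative Ehresmann connection chosen flat on certain restrictions, even though it cannot be chosen globally flat.

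Your proposed fixes (Haar averaging, and local face-map checks using local flatness) target compatibility with $\delta$, which is not where the difficulty lies. Flatness alone cannot make $C$ closed under $d$, as the $SU(2)$ example with $E=0$ shows.
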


    We will refer to this model as the isotropy Cartan model. The Cartan differerential on this model is computationally very similar to the differential of the Cartan model, especially on a local frame of $A_{\iso}^*$ compatible with the group structure of the isotropy groupoid. Interestingly, it nonetheless depends not only on the local structure of $A$, but on the global structure of $G$ as well.

    Now, we will first motivate this result before turning to the methods used in the proof.

\begin{example}
    Consider a proper bundle of Lie groups, which implies that the bundle is locally trivial. In the isotropy Cartan model, we have $\nu = T G_0$ and $A_{\iso}=A$, such that on any trivialisation on $U \subset G_0$, the restricted sections
    \[
        \Gamma(U, \wedge (\nu^*) \otimes S(A^*_{\iso}))^G \cong \Omega(U) \otimes \Gamma(U, S(A^*))^G
    \]
    not only agree with the Cartan model, but also can be interpreted as coming from the prouct structure of the stack
    \[
        [ (G|_U)_0 \dbl (G|_U)_1 ] = U \times B\mathcal{G}
    \]
    for the group $\mathcal{G}$ corresponding to the fibres. The differential of the isotropy Cartan model is under the identification $A|_U = \mathfrak{g} \times U$ also given by
    \[
        \ddR \otimes 1: \Omega(U) \otimes (S(\mathfrak{g}^*))^{\mathcal{G}} \rightarrow \Omega(U) \otimes (S(\mathfrak{g}^*))^{\mathcal{G}},
    \]
    which, noting that the differential on the Cartan model of $B \mathcal{G}$ vanishes, further solidifies this point of view.
\end{example}

Note, that even though the structure is locally trivial, interesting global phenomena may appear:

\begin{example}
    Consider the torus bundle over $\mathbb{S}^1$ constructed as follows:
    \[
        \mathbb{T}^2 \times [0,1]/((0,x) \sim (1, \psi(x)) )
    \]
    where $\psi(a, b)=(b^{-1},a) \in \mathbb{S}^1 \times  \mathbb{S}^1$ in terms of the group structure on $\mathbb{S}^1$ or equivalently, $\psi$ corresponds to
    \[
        \begin{pmatrix}
            0 & -1 \\
            1 & 0
        \end{pmatrix} \in \End(\mathbb{R}^2)
    \]
    descending to the quotient. Then (cf. \cite{KT25}, Proposition 4.3.21)
    \[
        H(\Tot(\BSS(G))) = H(\mathbb{S}^1) \otimes \mathbb{R}[x,y,z]/(4z^2-y(x^2-y)).
    \]
    That this expression is slightly involved to compute is a consequence of the fact that $\dCart$ and the cohomology of the Bott-Shulman-Stasheff bicomplex both depend on data exceeding $A$ (note that the algebroid is actually trivial in this example).
\end{example}

\begin{example} \label{ExOrbifoldIsoCartModel}
    Consider a smooth orbifold in the sense of a proper and etalé Lie groupoid $G$. Then the isotropy Cartan model is given by
    \[
        \Gamma(G_0, \wedge (T^* G) )^G \subset \Omega(G_0).
    \]
    This agrees with the notion of orbifold cohomology with coefficients in $\mathbb{R}$ as introduced in \cite{Sa56} and subsequently studied and related to higher structures in \cite{Mo99}.
\end{example}

\begin{example}
    Consider a Lie groupoid $G$ and a Lie subgroupoid $H$ such that
    \begin{enumerate}[a)]
        \item $H_0=G_0$.
        \item $H$ is a bundle of Lie groups.
        \item $H$ contains the connected component containing 0 of the isotropy groupoid.
        \item $H$ is invariant under the adjoint action of $G$ on its isotropy.
    \end{enumerate}
    A possible choice for $H$ is the connected component containing 0 of the isotropy groupoid itself, but also the ridgid isotropy groupoid, which contains all components of the isotropy groupoid that are locally isomorphic to $G_0$ (cf. \cite{KT25}, Definition 5.2.17).

    Then we have that there is a natural map of stacks
    \[
        [H_0 \dbl H_1] \rightarrow [G_0 \dbl G_1],
    \]
    which induces a map on cohomology. On the isotropy Cartan model, this corresponds to the inclusion
    \[
        \Gamma(G_0, \wedge (\nu^*) \otimes S(A^*_{\iso}))^G \subset \Gamma(G_0, \wedge (T^* G_0) \otimes S(A^*_{\iso}))^H
    \]
    where we may identify
    \[
        \Gamma(G_0, \wedge (\nu^*) \otimes S(A^*_{\iso}))^G = \left( \Gamma(G_0, \wedge (\nu^*) \otimes S(A^*_{\iso}))^H \right)^{(G/H)}.
    \]
    In particular, if the isotropy has a manifold structure itself, $G/H$ has no isotropy at all and the invariance precisely identifies the maps that factor through the topological quotient of $G$, which we can interpret as invariance, and vanish along the orbit foliation, which we can interpret as a horizontality condition. So the inclusion of models behaves similarly to the map induced on cohomology by a principal bundle.
    
    This behaviour is well-compatible with the fact that the above map covers the map of stacks
    \[
        [G_0 \dbl G_0] \rightarrow [ G_0 \dbl (G/H)_1],
    \]
    which may be interpreted as a quotient projection, in particular whenever the topological quotient of the orbit foliation is a well-defined manifold. In that case, the quotient manifold is naturally equivalent to $[ G_0 \dbl (G/H)_1]$ and the map indeed corresponds to the quotient projection. In general, as $G/H$ is proper and the isotropy algebroid vanishes (which makes it automatically Morita equivalent to an orbifold), we obtain in analogy to Example \ref{ExOrbifoldIsoCartModel} that the corresponding map on the cohomology is injective, and corresponds to an invariance condition. (As $H$ is a Lie group bundle, we may consider the associated horizontality condition trivially satisfied). If we now interpret $[H_0 \dbl H_1]$ as a pull-back of the diagram
    \begin{center}
        \begin{tikzcd}
            {[ H_0 \dbl H_1 ]} \arrow[r] \arrow[d] & {[ G_0 \dbl G_0 ]} \arrow[d] \\
            {[G_0 \dbl G_1]} \arrow[r] & {[G_0 \dbl (G/H)_1]}
        \end{tikzcd}
    \end{center}
    it is not surprising that some of the properties of the quotient projection, such as injectivity of the induced map on cohomology, are retained.
\end{example}

We will now start to review how the spaces of sections $\Gamma(G, \wedge^a \nu^* \otimes S^b(A^*_{\iso}))^G$ are contextualised to obtain the result of Theorem \ref{ThmThesisMainThm}. For this, we first need to review multiplicative Ehresmann connections. This concept was introduced by Fernandes and Marcut in \cite{FM23} with respect to bundles of ideals, which are subalgebroids of the Lie algebroid with certain properties, in particular the vanishing of the anchor. In our context, we will only need a specific instance of a bundle of ideals, which specifies the definition as follows:

\begin{defi}[cf. \cite{FM23}, Definition 2.4]
    Let $G$ be a regular Lie groupoid. A multiplicative Ehresmann connection with respect to the isotropy algebroid is a distribution $E \subset T G_1$ together with a splitting $TG_1 = E \oplus (\ker(Tt) \cap \ker(Ts))$, such that the map

    \begin{center}
    \begin{tikzcd}
	E \arrow[d, shift right, "Tt" left] \arrow[d, shift left, "Ts" right] \arrow[r, hook] & T G_1 \arrow[d, shift right, "Tt" left] \arrow[d, shift left, "Ts" right] \\
	T G_0 \arrow[r] & T G_0
	\end{tikzcd}
    \end{center}

    is an inclusion of Lie groupoids.
\end{defi}

In the following, whenever we refer to a multiplicative Ehresmann connection, we will mean a multiplicative Ehresmann connection with respect to the isotropy algebroid. Some of the concepts introduced in the following can be defined in a more general setting, but especially the results often use the particular form of the splitting of $T G_1$.

\begin{defi}[\cite{KT25}, Definition 2.1.8]
	The groupoid $J^E G$ is the fat groupoid associated to the VB-groupoid
	
	\begin{center}
	\begin{tikzcd}
	E \arrow[d, shift right, "Tt" left] \arrow[d, shift left, "Ts" right] \arrow[r] & G_1 \arrow[d, shift right, "t" left] \arrow[d, shift left, "s" right] \\
	T G_0 \arrow[r] & G_0
	\end{tikzcd}.
	\end{center}
	
	More explicitly, $(J^E G)_0=G_0$ and
	\[
	(J^E G)_1 = \{ V_x \subset E_x \text{ s.t. } Ts|_{V_x}, Tt|_{V_x} \text{ are isos} \} \subset (J^1 G)_1
	\]
\end{defi}

\begin{remark}
    In the source, the definition of $J^E G$ was done explicitly, but it embeds nicely in the framework of fat groupoids as introduced by Gracia-Saz and Mehta \cite{GSM17}.
\end{remark}

As a subgroupoid of the first jet groupoid, we may immediately transfer some properties:

\begin{lemma}
	If $G$ is a Lie groupoid acting on $p: N \rightarrow G_0$ then there is a canonical induced action of $J^1 G$ (and therefore $J^E G$) on $p \circ \pr_N: TN \rightarrow G_0$.
\end{lemma}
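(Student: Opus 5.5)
Write $\mu_N : G_1 \times_{s,p} N \to N$ for the action map, and set $\mu_N(g,n)=g\cdot n$.

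\emph{Step 1: the action map.} An arrow of $J^1 G$ from $x_0=s(g)$ to $t(g)$ is a subspace $V_g \subset T_gG_1$ on which $Ts$ and $Tt$ restrict to isomorphisms onto $T_{x_0}G_0$ and $T_{t(g)}G_0$. Let $Y \in T_nN$ with $p(n)=x_0$. Take the unique $Z \in V_g$ with $Ts(Z)=Tp(Y)$. Then $(Z,Y)$ lies in $T(G_1 \times_{s,p} N) = TG_1 \times_{Ts,Tp} TN$, and we define
\[
V_g \cdot Y := T\mu_N(Z,Y) \in T_{g\cdot n}N .
\]
The new base point is $p(g\cdot n)=t(g)$, which matches the target of $V_g$, so the anchor condition for the action on $p\circ \pr_N$ holds. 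For fixed $V_g$ the map $Y\mapsto V_g\cdot Y$ is the composition of the linear map $Y\mapsto (Z(Y),Y)$ with $T\mu_N$, so it is linear. Smoothness follows because $(J^1G)_1$ is an open subset of a Grassmannian bundle over $G_1$, and choosing $Z$ amounts to inverting the smoothly varying isomorphism $Ts|_{V_g}$.

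\emph{Step 2: unit and multiplication axioms.}
\begin{itemize}
\item[\textbullet] \emph{Unit.} The unit of $J^1G$ at $x$ is $T_x u(T_xG_0)$. Its element over $Tp(Y)$ is $Tu(Tp(Y))$, and differentiating $\mu_N(u(p(n)),n)=n$ gives $T\mu_N(Tu\,Tp\,Y,Y)=Y$.
\item[\textbullet] \emph{Compatibility with multiplication.} In $J^1G$ one has $V_h V_g = T\mu(V_h \times_{TG_0} V_g)$. Let $Z\in V_g$ be the lift of $Tp(Y)$. Set $Y'=T\mu_N(Z,Y)$, so that $Tp(Y')=Tt(Z)$, and let $W\in V_h$ be the lift of $Tp(Y')$. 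Then $T\mu(W,Z)$ is the lift of $Tp(Y)$ in $V_hV_g$. Differentiating the associativity identity $\mu_N(\mu(h,g),n)=\mu_N(h,\mu_N(g,n))$ on $G_2\times_{G_0}N$ at the tangent vector $(W,Z,Y)$ gives $(V_hV_g)\cdot Y = V_h\cdot(V_g\cdot Y)$.
\end{itemize}

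\emph{Step 3: restriction to $J^E G$.} $J^EG$ is a Lie subgroupoid of $J^1G$ over the same base $G_0$, so restricting the action gives a canonical action of $J^EG$. This needs $J^EG$ to be a subgroupoid of $J^1 G$. That holds because $E$ is a multiplicative subgroupoid of $TG_1$: the product $T\mu(V_h\times V_g)$ of subspaces of $E$ stays in $E$, and the same holds for units and inverses.

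\emph{Main obstacle.} The only subtle point is checking that the tangent-level associativity computation is valid on the fibred products, i.e.\ that $(W,Z,Y)$ really is a tangent vector to $G_2\times_{G_0}N$. This is exactly guaranteed by the compatibility conditions $Ts(W)=Tt(Z)$ and $Ts(Z)=Tp(Y)$ built into the lifts. Smoothness is routine given the Grassmannian description.
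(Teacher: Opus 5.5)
Your proposal is correct and follows the paper's proof. The paper defines the action by the same formula, $(V_g,Y)\mapsto T\mu_N(Z,Y)$ with $Z\in V_g$ the unique lift of $Tp(Y)$, and simply asserts that the action axioms are inherited from those of the original action; your differentiation of the unit and associativity identities is exactly that verification written out, and your restriction to $J^EG$ as a subgroupoid of $J^1G$ matches what the paper takes for granted.
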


\begin{proof}
The action properties are inherited from the original action in a straightforward way, if $L: G_1 \times^{s, p} N \rightarrow N$ describes the action, then
	\[
	(V_x, Y) \mapsto TL(Z,Y)
	\]
	for $Z \in V_x$ with $Ts(Z)=Tp(Y)$ describes the induced action (cf. \cite{KT25}, Proposition 1.2.10).
\end{proof}

This action now induces a dual action on $T^* N$ and a notion of invariance on $\Omega(N)$.

\begin{example} \label{ExFamilyActionsOnNerve}
The expressions
	\[
	(x, z_1, \dots, z_q) \mapsto(\mu(x, z_1), z_2, \dots, z_q)
	\]
	as well as
	\[
	(x, z_1, \dots, z_q) \mapsto (\dots, z_{i-1}, \mu(z_i, x^{-1}), \mu(x, z_{i+1}), \dots)
	\]
	and
	\[
	(x, z_1, \dots, z_q) \mapsto (\dots, z_{q-1}, \mu(z_q, x^{-1}))
	\]
	define left actions of $G$ on $G_q$. Therefore, we obtain induced actions of $J^E G$ on $T G_q$. We will denote the set of differential $p$-forms jointly invariant under all of these actions by $(\Omega^p(G_q))^{J^E G}$.
\end{example}

It turns out that these invariant forms can be used to compute the first page of the spectral sequence, i.e. the cohomology of the columns of the Bott-Shulman-Stasheff complex.

\begin{theorem}[\cite{KT25}, Theorem 2.3.13]
The inclusion
	\[
	(\Omega^p(G_\bullet))^{J^E G} \subset \Omega^p(G_\bullet)
	\]
	defines the inclusion of a subcomplex with respect to $\delta$ that is also a quasi-isomorphism.
\end{theorem}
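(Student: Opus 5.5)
The statement has two parts: $(\Omega^p(G_\bullet))^{J^E G}$ is preserved by $\delta$, and the inclusion is a quasi-isomorphism for each fixed $p$.

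\emph{Subcomplex.} Each face map $d_i: G_{q+1} \rightarrow G_q$ intertwines the $G$-actions of Example \ref{ExFamilyActionsOnNerve} on $G_{q+1}$ with those on $G_q$, up to reindexing. The outer face maps simply forget one of the actions. The inner face $d_i$ multiplies $z_i$ and $z_{i+1}$, which absorbs the "middle" action $\mu(z_i,x^{-1}),\mu(x,z_{i+1})$ into the identity action on the product. Since each $d_i$ is equivariant, its tangent map is equivariant for the induced $J^1 G$-actions, and hence for the $J^E G$-actions on $TG_\bullet$. So $d_i^*$ sends jointly invariant forms to jointly invariant forms. This needs one check: for the action absorbed by an inner face, the pullback of an invariant form is still invariant under the corresponding action on $G_{q+1}$. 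That follows because equivariance yields $d_i^*(V_x\cdot\omega) = V_x\cdot d_i^*\omega$ for every action in the list. Hence $\delta = \sum (-1)^{i+q} d_i^*$ preserves invariance.

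\emph{Quasi-isomorphism.} I would build a homotopy retraction onto the invariant part rather than compute both sides separately. First, use properness of $G$ to pick a Haar system and cutoff function, as in Crainic's proof of vanishing of differentiable cohomology \cite{Cr03}. Second, define an averaging operator $P$ on $\Omega^p(G_q)$: the space $J^E G$ is fibred over $G_1$ by an affine bundle with fibre $\Hom(\nu, A_{\iso})$-type data, which is contractible. So one can average over $G$ using the Haar system and take a canonical $J^E G$-lift of the $G$-action. The canonical lift comes from the multiplicativity of $E$: since $E$ is a subgroupoid of $TG$, the splitting $E_x$ itself is an element of $(J^EG)_1$ when it is transverse. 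Equivalently, note that $\Omega^p(G_q)$ with the simplicial structure is $C(G,\cdot)$ with coefficients in a module of the form $\Gamma(\wedge^p T^*G_q)$, roughly the representation up to homotopy $S^p(\Ad^*)$. By Theorem \ref{ThmCoadRuthComputesColumns}, Corollary \ref{CorColumnsViaRedRuth} and Lemma \ref{LemmaCohomStrictRuths}, in the proper regular case the cohomology of each column is $\Gamma(G_0,\wedge^{p-q}\nu^*\otimes S^q(A_{\iso}^*))^G$. It then suffices to show two things. First, the invariant subcomplex computes the same groups, by an explicit decomposition of $T^*G_q$ via $E$ into normal and isotropy parts, with invariance forcing the isotropy directions to behave like the strict representation $B^*$. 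Second, the induced map is the identity on these identifications.

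\emph{Main obstacle.} The hard step is this explicit computation: showing that the $J^E G$-invariant forms on $G_q$ form a complex isomorphic, degree by degree, to $C(G, S^p(B^*))$ restricted to invariant data, and that the inclusion realises Theorem \ref{ThmCohomologyRegularRuths}'s quasi-isomorphism $\Phi$. My first attempt would be to filter both complexes by the number of isotropy-form factors, mirroring the filtration in Lemma \ref{LemmaCohomStrictRuths}. Then compare first pages, where properness kills all positive $G$-cohomology, and conclude by spectral sequence comparison.
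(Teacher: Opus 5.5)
Your subcomplex argument is essentially right, with one correction. For the action attached to the vertex that $d_i$ deletes, $d_i$ is \emph{invariant}, not equivariant; for instance $\mu(\mu(z_i,x^{-1}),\mu(x,z_{i+1}))=\mu(z_i,z_{i+1})$. Hence $T(d_i\circ L)=Td_i\circ T\pr_2$, and this is why $d_i^*\omega$ is invariant for that action; the other actions are intertwined with reindexed ones. This is the ``equivariant or invariant'' dichotomy the paper also uses.

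The quasi-isomorphism half has a genuine gap. The paper does not prove this statement but imports it from \cite{KT25}, so your argument must stand alone, and several of its ingredients are false. $E_x$ has dimension $\dim G_0+\mathrm{rk}\,\mathcal{F}$, so it is not an element of $(J^EG)_1$ unless $\mathcal{F}=0$. The fibres of $(J^EG)_1\to G_1$ are open subsets of affine spaces modelled on $\Hom(T_{s(x)}G_0,\mathcal{F}_{t(x)})$, not on $\Hom(\nu,A_{\iso})$: the isotropy directions are exactly what $E$ excludes. These fibres need not be connected. Above a unit $1_x$ the jets form a group isomorphic to the automorphisms of $T_xG_0$ inducing the identity on $\nu_x$. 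This group is non-compact, so no Haar-system averaging produces $J^EG$-invariant forms; invariance in these directions is instead forced by the degenerate jets in $\overline{J^EG}$ that annihilate $\mathcal{F}$ and $A\cap E$. Moreover, even a chain retraction $P$ with $P\iota=\id$ only gives injectivity on cohomology. The homotopy $\iota P\simeq\id$ for $\delta$ is the real content, and you do not address it; no single action is compatible with all face maps, so it is not automatic.

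Your second route fails as formulated. By \cite{KT25}, Proposition 2.1.41, the invariant forms in level $q$ are $\bigoplus_{a+b=p}\Gamma(G_0,\wedge^a\nu^*\otimes\wedge^b((A^*_{\iso})^{\oplus q}))^G$. This is a fibrewise Bott-type complex, not degreewise isomorphic to (an invariant part of) $C(G,S^p(B^*))$. Already for $p=1$, level $2$ contains $\Gamma(G_0,A^*_{\iso}\oplus A^*_{\iso})^G$, while $S^1(B^*)$ has a single copy of $A^*_{\iso}$. Computing its cohomology is the nontrivial content of Theorem~\ref{ThmThesisIsoCartInclusionToColumns}.

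More importantly, knowing that both complexes have cohomology abstractly isomorphic to $\Gamma(G_0,\wedge^{p-\bullet}\nu^*\otimes S^\bullet(A^*_{\iso}))^G$ says nothing about the inclusion. Everything lies in your ``second point'', which you leave open. The proposed filtration comparison cannot supply it. The graded pieces of the full complex $\Omega^p(G_\bullet)$ are not complexes $C(G,V)$ for a representation on a fixed bundle over $G_0$, since the relevant bundles over $G_q$ have rank growing with $q$. So the vanishing result of \cite{Cr03} used in Lemma~\ref{LemmaCohomStrictRuths} gives you no first page to compare.

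What is missing is a chain-level zig-zag carrying invariant cocycles through a known isomorphism. The $J^EG$-invariant sub-bicomplex in the proof of Theorem~\ref{ThmMainTwoIsosAgree} is such a device. There the elements $\alpha_k$ with $D(\alpha_{k+1})=\flat(\alpha_k)$ and $d_0^*\alpha=\flat(\alpha_{b-1})$ show that Arias Abad's isomorphism sends the class of an isotropy Cartan form $\alpha$ to $\pm\alpha$. Together with Theorem~\ref{ThmThesisIsoCartInclusionToColumns}, a two-out-of-three argument of that kind would give the statement. Nothing of this sort appears in your proposal.
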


This is useful, as the $J^E G$-invariant forms are easily acessible for computation due to the following fact:

\begin{prp}[\cite{KT25}, Proposition 2.1.41]
	The restriction of forms to their values at the unit section yields an isomorphism
	\[
	(\Omega^p(G_q))^{J^E G} \cong \bigoplus_{a+b=p} (\Gamma(G_0, \wedge^a \nu^* \otimes \wedge^b ((A^*_{\iso})^{\oplus q}) ))^G.
	\]
\end{prp}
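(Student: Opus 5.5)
Restricting a form to the unit section $1^{(q)}_{x}=(1_x,\dots,1_x)\in G_q$ gives a linear map
\[
r:(\Omega^p(G_q))^{J^E G}\to \Gamma\bigl(G_0,\wedge^p T^*_{1^{(q)}}G_q\bigr).
\]
The plan has three steps: identify the image of $r$ fibrewise, show that $r$ is injective on invariant forms, and show that every invariant section in the target extends to an invariant form.

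\emph{Step 1 (splitting at the unit section).} Using the splitting $TG_1=E\oplus(\ker Tt\cap\ker Ts)$ along units, I write the tangent space as
\[
T_{1^{(q)}_x}G_q\cong T_xG_0\oplus (A_{\iso})_x^{\oplus q},
\]
with one copy of $A_{\iso}$ for each arrow coordinate. The $T_xG_0$ factor is the image of $E$ along the degenerate simplex of units. The stabiliser of $1^{(q)}_x$ under the actions of Example \ref{ExFamilyActionsOnNerve} contains all $V_{1_x}\in (J^EG)_1$ lying over the unit. These elements act on $T_xG_0$ by $Y\mapsto Y+\rho(\cdot)$-type shears along the orbit directions $\mathcal{F}_x$, and trivially on the isotropy parts. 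I expect invariance under these unipotent shears to force an invariant form to have, in its $T_xG_0$-part, only components vanishing on $\mathcal{F}_x$. This yields the fibrewise identification
\[
\wedge^p(\text{invariant covectors})=\bigoplus_{a+b=p}\wedge^a\nu^*_x\otimes \wedge^b\bigl((A_{\iso}^*)_x^{\oplus q}\bigr).
\]
The arrows $V_g$ with $g:x\to y$ carry the fibre at $x$ to the fibre at $y$ via the normal representation and the adjoint action on $A_{\iso}$ (Example \ref{ExAisoStrictRuth}). Hence the restricted section is $G$-invariant.

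\emph{Step 2 (injectivity).} The combined actions of $G$ on $G_q$ listed in Example \ref{ExFamilyActionsOnNerve} act transitively on each fibre of the map $G_q\to G_0$, $(z_1,\dots,z_q)\mapsto t(z_1)$, modulo isotropy. More precisely, every point of $G_q$ is reached from a unit point by acting successively with $x=z_1$, then with the second action, and so on. The lifts $V_x\in J^EG$ act by linear isomorphisms of tangent spaces that cover these maps. An invariant form is therefore determined by its values along the unit section, so $r$ is injective.

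\emph{Step 3 (surjectivity).} Given a $G$-invariant section $\sigma$ of the bundle above, I define a form on $G_q$ by transporting $\sigma$ from a unit point using any choice of $V_x$. The main obstacle is well-definedness. Different choices of $V_x$ over the same $x$, and different paths reaching the same point, differ by elements of the stabiliser. Invariance of $\sigma$ under the isotropy adjoint and normal actions, together with the shear-invariance from Step 1, should make the result independent of these choices. Smoothness then follows because $(J^EG)_1\to G_1$ is a surjective submersion admitting local sections, since $E$ is a smooth distribution.

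To check well-definedness I would first treat $q=1$. There, $T_{1_x}G_1=T_xG_0\oplus (A_{\iso})_x$ and $\ker Ts$ restricted to $E$ projects to $\mathcal{F}$. After that I would induct on $q$ using the face and degeneracy structure. Joint invariance under the listed actions then follows because they pairwise commute, as a direct check with associativity of $\mu$ shows.
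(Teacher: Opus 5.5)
\textbf{There is a genuine gap in Step~1, and Steps~2 and~3 inherit it.} Your overall strategy is sound: restrict to units, use that every point of $G_q$ lies in the orbit of a unit point under the commuting actions of Example~\ref{ExFamilyActionsOnNerve}, identify the stabiliser of $1^{(q)}_x$ with jets over a single $g\in G_x$ in every slot, and transport back. The paper only cites \cite{KT25} for this proposition, but it argues the analogous statement for sections of $(p^0_q)^*S^p(\Ad^*)$ this way in the proof of Theorem~\ref{ThmMainTwoIsosAgree}.

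\emph{The tangent space is misidentified.} The decomposition $T_{1^{(q)}_x}G_q\cong T_xG_0\oplus (A_{\iso})_x^{\oplus q}$ is false whenever $\mathcal{F}\neq 0$. From $T_{1_x}G_1=Tu(T_xG_0)\oplus A_x$ one gets $T_{1^{(q)}_x}G_q\cong T_xG_0\oplus A_x^{\oplus q}$. Moreover $A=A_{\iso}\oplus(A\cap E)$ with $\rho\colon A\cap E\to\mathcal{F}$ an isomorphism, so the two sides differ in dimension by $q\,\mathrm{rk}\,\mathcal{F}$. Equivalently, $E_{1_x}=Tu(T_xG_0)\oplus(A\cap E)_x$ is strictly bigger than $T_xG_0$, because $E$ complements only $\ker Ts\cap\ker Tt$, not $\ker Ts$. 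Your own remark in Step~3, that $\ker Ts\cap E$ maps onto $\mathcal{F}$, already contradicts $T_{1_x}G_1=T_xG_0\oplus(A_{\iso})_x$.

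You have therefore dropped $q$ copies of $(A\cap E)_x\cong\mathcal{F}_x$, and invariant forms must vanish on these too. Your fibrewise identification matches the right-hand side only because these directions were never present. Jets at units also do not act only on the $T_xG_0$-part. For $q=1$, write $v=Tu(Y)+a$ and a jet at $1_x$ as the graph of $\alpha\colon T_xG_0\to(A\cap E)_x$. Then left multiplication acts by $(Y,a)\mapsto(Y,\,a+\alpha(Y+\rho(a)))$, and $z\mapsto zx^{-1}$ acts by $(Y,a)\mapsto(Y+\rho\alpha(Y),\,a-\alpha(Y))$. So it is exactly the $A\cap E$-components that move.

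\emph{Unipotent shears do not force vanishing along $\mathcal{F}$.} Maps $Y\mapsto Y+\beta([Y])$ with $\beta\colon\nu\to\mathcal{F}$ are insufficient. If $G$ is transitive there are none, yet the claim requires vanishing on $\mathcal{F}=TG_0$. If $\dim\nu=\dim\mathcal{F}=1$, the volume form is shear-invariant.

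What you need instead is that $\id+\rho\alpha$ runs through all invertible maps inducing the identity on $\nu_x$, in particular rescalings of $\mathcal{F}_x$. Take $\alpha=(c-1)\rho^{-1}\pi_{\mathcal{F}}$ for a projection $\pi_{\mathcal{F}}$ onto $\mathcal{F}_x$ and let $c\to 0$. This amounts to passing to the closure $\overline{(J^EG)_1}=\{V_x\subset E_x : Ts|_{V_x}\text{ iso}\}$, as the paper does. It yields degenerate jets at units which, applied through the various actions, force invariant forms to vanish on
\[
K_x=\{(v_1,\dots,v_q): v_j\in E_{1_x},\ Ts(v_j)\in\mathcal{F}_x\}\cong\mathcal{F}_x\oplus(A\cap E)_x^{\oplus q}.
\]
Since $T_{1^{(q)}_x}G_q/K_x\cong\nu_x\oplus(A_{\iso})_x^{\oplus q}$, this gives exactly the stated fibrewise space.

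\emph{Consequence for Step~3.} You must extend $\sigma$ by zero on $K$. This extension is invariant under jets at units, because they act as the identity plus a map into $K$ preserving $K$. Jets over $g\in G_x$ then act on the quotient by the normal and adjoint representations, where $G$-invariance of $\sigma$ applies. As written, Step~3 never says what the transported form does on the $K$-directions, so its well-definedness cannot be checked.
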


This does not yet correspond to the expression referred to earlier.

\begin{defi}
	The inclusion
	\[
	S^q(A^*_{\iso}) \subset \wedge^q((A^*_{\iso})^{\oplus q})
	\]
	as the (automatically of multi-degree $(1, \dots, 1)$) elements invariant under the permutation action of $\Sigma_q$ on the summands of $(A^*_{\iso})^{\oplus q}$ induces an inclusion
	\[
	(\Gamma(G_0, \wedge^{p-q} \nu^* \otimes S^q (A^*_{\iso}) ))^G \subset (\Gamma(G_0, \wedge^{p-q} \nu^* \otimes \wedge^q ((A^*_{\iso})^{\oplus q}) ))^G.
	\]
	Elements of $(\Omega^p(G_q))^{J^E G}$ corresponding to the image are referred to as $J^E G$-isotropy Cartan forms.
\end{defi}

\begin{theorem}[\cite{KT25}, Theorem 3.1.20] \label{ThmThesisIsoCartInclusionToColumns}
	The inclusion
	\[
	(\Gamma(G_0, \wedge^{p-\bullet} \nu^* \otimes S^\bullet (A^*_{\iso}) ))^G \subset (\Omega^p(G_\bullet))^G
	\]
	defines the inclusion of a subcomplex with respect to $\delta$ that is also a quasi-isomorphism. Furthermore, the restriction of $\delta$ to the subcomplex vanishes.
\end{theorem}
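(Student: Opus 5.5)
The plan is to work entirely inside the $J^E G$-invariant column complex $((\Omega^p(G_\bullet))^{J^E G},\delta)$, which by \cite{KT25}, Theorem 2.3.13 is quasi-isomorphic to the full column. By \cite{KT25}, Proposition 2.1.41 each term is identified, via restriction to the unit section, with
\[
\bigoplus_{a+b=p} \Gamma\bigl(G_0, \wedge^a \nu^* \otimes \wedge^b ((A^*_{\iso})^{\oplus q})\bigr)^G .
\]
The first task is to transport $\delta = \sum_i (-1)^{i+q} d_i^*$ through this identification. For each face map, the tangent map at the unit section sends the isotropy directions of $G_{q+1}$ to those of $G_q$ in the standard bar pattern: $d_0$ forgets the first summand, $d_{q+1}$ forgets the last, and the inner $d_i$ add two adjacent summands. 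On the $\nu$-directions every $d_i$ acts as the identity. Hence the $\nu$-part is inert, and on the isotropy part $\delta$ becomes the differential of the complex computing the cohomology of the isotropy Lie algebra bundle with trivial coefficients, in its "polynomial" (Chevalley--Eilenberg/Bott) form. This description is valid because $J^E G$-invariance lets us compute everything at the units, so the $\nu^*$-factor only carries the $G$-invariance constraint.

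Vanishing of $\delta$ on the subcomplex is then a direct combinatorial check. An element of $S^q(A^*_{\iso})$ lies in multidegree $(1,\dots,1)$ and is $\Sigma_q$-symmetric, say $\alpha_1\cdots\alpha_q$ placed in the $q$ slots. Pulling back along $d_0$ or $d_{q+1}$ produces a term with a missing slot, which has the wrong multidegree in $\wedge^q((A^*_{\iso})^{\oplus (q+1)})$ only after comparison with the inner faces. Pulling back along an inner face $d_i$ duplicates $\alpha$ into slots $i$ and $i+1$ through the diagonal. Symmetry under transpositions makes adjacent terms cancel in the alternating sum. In fact, since the image of $S^q$ lands in the (polynomial-type) component where each slot carries exactly one factor, I expect all terms to cancel pairwise, giving $\delta=0$ on the subcomplex. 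In particular the subcomplex is closed under $\delta$.

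For the quasi-isomorphism, I would reduce the statement fiberwise. Both sides are $G$-invariant sections of bundles, and $G$ is proper, so averaging with a Haar system commutes with the cohomology. It therefore suffices to show that the bundle-level inclusion is a quasi-isomorphism and that the map is $G$-equivariant. At a point $x$, the complex is $\wedge^{p-q}\nu_x^* \otimes C^q$, where $C^\bullet$ is the unit-section linearisation of the BSS column for the isotropy group $G_x$. By Bott's classical computation (equivalently Theorem \ref{ThmCoadRuthComputesColumns} applied to a Lie group, whose cohomology by the column identification in Corollary \ref{CorColumnsViaRedRuth} is $S^q(\mathfrak g^*)$), the symmetric tensors in degree $(1,\dots,1)$ are exactly representatives of cohomology. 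That is, $H^q$ of the linearised column is $S^q(\mathfrak{g}_x^*)$ and the inclusion hits it isomorphically. Taking $G$-invariants, which is exact for proper $G$, then yields the global quasi-isomorphism, and it also matches Corollary \ref{CorColumnsViaRedRuth} dimensionally.

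The main obstacle will be this fiberwise step. The linearised complex after restriction to units is a purely algebraic (non-invariant) complex of tensors on $\mathfrak{g}_x^{\oplus q}$, and it is only the $J^E G$-invariance that encodes the group invariance. So I must carefully show that its cohomology is concentrated in the symmetric part, presumably via a contracting homotopy that symmetrises and that is compatible with $G$-equivariance. My first attempt would be the standard homotopy for the bar construction of the additive group $\mathfrak g_x$, whose cohomology with polynomial coefficients is $S(\mathfrak g_x^*)$, followed by averaging.
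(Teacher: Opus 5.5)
The paper does not prove this statement itself. It is quoted from \cite{KT25}, and the proof of Theorem~\ref{ThmMainTwoIsosAgree} only recalls that Lemma 3.1.11 and Corollary 3.1.12 there analyse the face maps explicitly. So I assess your proposal on its merits.

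Your architecture is sound. At the units, and modulo $\mathcal{F}$ and $A\cap E$, each face map is the identity on $\nu$ and acts on $A_{\iso}^{\oplus q}$ by the bar pattern; this uses $T\mu(E\times E)\subset E$. So the $J^E G$-invariant column is $\Gamma(G_0,\mathcal{C}^\bullet)^G$ for a complex of $G$-equivariant bundles with fibre $\bigoplus_{a+b=p}\wedge^a\nu_x^*\otimes C_b^\bullet$, where $C_b^q=\wedge^b((\mathfrak g_x^*)^{\oplus q})$. Note that the fibre is not $\wedge^{p-q}\nu_x^*\otimes C^q$: $b=q$ holds only on the subcomplex.

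\textbf{The gap.} It is the fibrewise claim that $H(C_b^\bullet)$ is $S^b(\mathfrak g_x^*)$, concentrated in degree $b$ and represented by the symmetric tensors. This is the entire content of the quasi-isomorphism, and neither of your justifications establishes it.

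First, $C_b^\bullet$ is not the Bott--Shulman--Stasheff column of $G_x$. Bott's theorem, or Theorem~\ref{ThmCoadRuthComputesColumns} with Corollary~\ref{CorColumnsViaRedRuth}, applied to the compact group $G_x$ only yields $S^b(\mathfrak g_x^*)^{G_x}$. That does not determine $H(C_b^\bullet)$: for $G_x=SU(2)$ and $b=1$ it is $0$, while $H^1(C_1^\bullet)=\mathfrak g_x^*$.

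Second, polynomial (levelwise $S^\bullet$) cochains on the additive group $\mathfrak g_x$ have cohomology $\wedge(\mathfrak g_x^*)$, not $S(\mathfrak g_x^*)$. Your complex, levelwise $\wedge^b$, is the d\'ecalage-dual one, so the bar-construction homotopy you propose is aimed at the wrong complex.

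\textbf{How to close it.} Pass to normalised cochains, i.e.\ positive degree in every slot: $N_b^q=\bigoplus_{b_1+\dots+b_q=b,\ b_i\ge 1}\wedge^{b_1}\mathfrak g_x^*\otimes\cdots\otimes\wedge^{b_q}\mathfrak g_x^*$. This vanishes for $q>b$ and equals $(\mathfrak g_x^*)^{\otimes b}$ for $q=b$. Moreover, $\delta(N_b^{b-1})$ is spanned by antisymmetrisations in adjacent slots. Hence $H^b$ is the space of $\Sigma_b$-coinvariants, onto which the symmetric tensors map isomorphically in characteristic $0$.

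The vanishing of $H^q$ for $q<b$ is the Koszul property of the exterior algebra, since $N_b^\bullet$ is the weight-$b$ part of its cobar complex. Alternatively, $C_b^\bullet$ only sees the underlying vector space of $\mathfrak g_x$. It is therefore the translation-invariant column of a torus $T$ of the same dimension. Your citation applied to $T$ rather than $G_x$, together with \cite{KT25}, Theorem 2.3.13 (with $E=0$), then gives concentration in degree $b$.

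Passing to $G$-invariant sections needs two further ingredients. One is constant rank of the bundle differentials: this is automatic, since they are built functorially from the constant-rank bundles $A_{\iso}$ and $\nu$, but it should be said. The other is averaging over the proper $G$.

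\textbf{On the vanishing of $\delta$.} Symmetry plays no role here. Take $\alpha$ of multidegree $(1,\dots,1)$, and write $\epsilon_k$ for the element with slot $k$ empty and the factors in order elsewhere. Then $d_0^*\alpha=\epsilon_1$, $d_i^*\alpha=\epsilon_i+\epsilon_{i+1}$ for $1\le i\le q$, and $d_{q+1}^*\alpha=\epsilon_{q+1}$, so the alternating sum telescopes to zero. Symmetry is what makes these classes non-exact, not what makes them closed.
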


From this, it is not directly clear how to obtain the result of Theorem \ref{ThmThesisMainThm}. The proof exceeds the scope of this paper and requires a deep analysis of the spectral sequence constructed from the filtration of the Bott-Shulman-Stasheff bicomplex by columns, the choice of homotopies in the quasi-isomorphisms and the impact of the multiplicative Ehresmann connection. In particular, the proof of the collapse of the spectral sequence is explicit and requires the construction of a multiplicative Ehresmann connection which, while it can not be constructed globally flat, is flat on certain restrictions.

Let us also note that a part of the omitted proof is showing that the Cartan differential can equivalently be defined in three ways:

\begin{prp}[\cite{KT25}, Proposition 4.1.20] \label{PrpThesisDefisDCart}
    The following maps
    \[
     (\Gamma(G_0, \wedge^{p-q} \nu^* \otimes S^q (A^*_{\iso}) ))^G \rightarrow (\Gamma(G_0, \wedge^{p+1-q} \nu^* \otimes S^q (A^*_{\iso}) ))^G
    \]
    agree:
    \begin{enumerate}[a)]
        \item The map corresponding to the differential of the first page of the spectral sequence constructed from the filtration of the Bott-Shulman-Stasheff bicomplex by columns.
        \item The map induced by the exteriour covariant derivative on
        \[
        \Gamma(G_0, \wedge^{p-q} \nu^* \otimes S^q (A^*_{\iso}) )
        \]
        with respect to a connection on $A_{\iso}$ obtained by differentiating the restriction of a multiplicative Ehresmann connection to the isotropy groupoid.
        \item The map corresponding on $J^E G$-invariant forms to the component of the de-Rham differential retaining the $E$-vertical degree for any choice of multiplicative Ehresmann connection $E$.
    \end{enumerate}
    The map described this way is denoted by $\dCart$.
\end{prp}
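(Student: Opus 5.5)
I would prove the agreement of the three descriptions in the cyclic order a) $=$ c) $=$ b). Each identification is a computation on explicit representatives. I fix a multiplicative Ehresmann connection $E$ throughout, and use the quasi-isomorphisms $(\Omega^p(G_\bullet))^{J^E G}\subset \Omega^p(G_\bullet)$ and Theorem \ref{ThmThesisIsoCartInclusionToColumns} to represent first-page classes.

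\textbf{Step 1: a) $=$ c).} Let $\omega\in(\Omega^p(G_q))^{J^E G}$ be a $J^E G$-isotropy Cartan form. By Theorem \ref{ThmThesisIsoCartInclusionToColumns}, $\delta\omega=0$, so $\omega$ represents a class on the first page. The first-page differential is induced by $\ddR$, but $\ddR\omega$ need not be $J^E G$-invariant, let alone Cartan. I would decompose $\ddR\omega$ using the splitting $TG_q=E^{(q)}\oplus\ker$ induced by $E$, and sort components by $E$-vertical degree, i.e.\ the number of slots fed from $\ker(Ts)\cap\ker(Tt)$, which correspond to $A_{\iso}$.

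The component that raises the $\nu^*$-degree by one and keeps the vertical degree $q$ is the candidate c). The remaining components are of two kinds. Components that raise the vertical degree are excluded by the bidegree of the target. Components that lower it should be $\delta$-exact modulo invariant forms, by the homotopy used in the proof of Theorem \ref{ThmThesisIsoCartInclusionToColumns}.

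So I would show that $\ddR\omega-(\ddR\omega)_{\text{vert-preserving}}$ is $\delta$-exact in column $p+1$. A cleaner alternative is to show that its restriction to the unit section vanishes after symmetrisation, and hence that it is zero in cohomology by the restriction isomorphism (Proposition 2.1.41 of \cite{KT25}). I expect this to be the main obstacle. It requires controlling how $\ddR$ interacts with the non-involutivity of $E$, whose curvature produces the vertical-degree-changing terms.

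\textbf{Step 2: c) $=$ b).} Both sides are local, so I would work on the unit section. Restricting $E$ to the isotropy groupoid $I$ gives a multiplicative distribution on $I$. Differentiating it at the units yields a linear connection $\nabla$ on $A_{\iso}$; this is the connection appearing in b).

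I would then compute the vertical-degree-preserving part of $\ddR\omega$ at a unit point $1_x$. Using left-invariant extensions along $E$, a form $\alpha\otimes\xi$ with $\alpha\in\Gamma(\wedge\nu^*)$ and $\xi\in\Gamma(S^q A_{\iso}^*)$ is realised as an invariant form on $G_q$. The Cartan formula for $\ddR$ then produces two kinds of terms:
\begin{itemize}
\item $\ddR\alpha$, which descends to $\nu^*$ since $G$-invariant sections are basic;
\item terms $\alpha\wedge\nabla\xi$, where the Lie brackets of $E$-horizontal lifts with vertical invariant fields are exactly $\nabla$ by definition.
\end{itemize}
Together these give $d^\nabla(\alpha\otimes\xi)$.

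\textbf{Step 3: independence of choices.} Invariance of b) under change of $E$ then follows from a): the first-page differential is intrinsic, so all three maps agree for every choice of $E$, as the statement claims. Along the way one should also check that b) preserves $G$-invariant sections and symmetric tensors; this follows from multiplicativity of $E$.
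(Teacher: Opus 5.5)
The paper does not prove this proposition: it is quoted from \cite{KT25}, and its proof is explicitly omitted. Your proposal therefore has to stand on its own, and it has a genuine gap exactly at the step you call the main obstacle.

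Because $\ddR\omega$ is not $J^E G$-invariant, neither of the tools you invoke tells you anything about its class. The homotopy behind Theorem \ref{ThmThesisIsoCartInclusionToColumns} lives on the invariant subcomplex. The restriction isomorphism of \cite{KT25}, Proposition 2.1.41, is an isomorphism only on $J^E G$-invariant forms. Moreover, as long as you do not know that the $E$-bidegree components of $\ddR\omega$ are separately $\delta$-closed, you can neither discard one ``by the bidegree of the target'' nor call another $\delta$-exact.

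The missing idea is that the splitting $TG_q=E^{(q)}\oplus V^{(q)}$ is preserved by \emph{every} face map. Here $E^{(q)}$ is the set of composable $q$-tuples of vectors in $E$ and $V^{(q)}$ the set of $q$-tuples in $\ker(Ts)\cap\ker(Tt)$; both $E$ and $\ker(Ts)\cap\ker(Tt)$ are closed under $T\mu$. Hence $\delta$ preserves the $E$-bidegree on all forms, invariant or not, and each column splits as complexes: $\Omega^p(G_\bullet)=\bigoplus_{a+b=p}\Omega^{a,b}(G_\bullet)$, where $b$ counts arguments in $V^{(q)}$.

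The inclusion of $J^E G$-invariant forms (\cite{KT25}, Theorem 2.3.13) is bigraded. By Theorem \ref{ThmThesisIsoCartInclusionToColumns}, the cohomology in simplicial degree $q$ is exhausted by isotropy Cartan forms of bidegree $(p-q,q)$. So $H^q(\Omega^{a,b}(G_\bullet),\delta)=0$ for $b\neq q$. Every bidegree component of the cocycle $\ddR\omega$ is then a cocycle, and all but the one of bidegree $(p+1-q,q)$ are exact. Thus the first-page differential sends $[\omega]$ to $[(\ddR\omega)_{(p+1-q,q)}]$, with no homotopy or curvature estimate needed.

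This still does not finish a) $=$ c) $=$ b). Step 2 only computes values along the unit section. The result you would use to turn unit values into a class (Proposition 2.1.41 of \cite{KT25}) applies only to $J^E G$-invariant forms, and you have not shown that $(\ddR\omega)_{(p+1-q,q)}$ is one; c) as stated also presupposes this. You can close this in one of two ways:
\begin{enumerate}[a)]
    \item Prove that this component is again a $J^E G$-isotropy Cartan form.
    \item Use a detection map defined on all forms. Evaluation along the total unit section $G_0\to G_q$ commutes with the face maps, so it is a $\delta$-chain map into the complex of the linear groupoid $TG|_{G_0}$. You would have to show it is injective on column cohomology. After restricting to $\nu'\oplus A_{\iso}^{\oplus q}$ for a complement $\nu'$ of $\mathcal{F}$, the class of a cocycle is the symmetrisation of its multidegree $(1,\dots,1)$ part.
\end{enumerate}

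Such a map is also what Step 3 lacks. The first-page differential is intrinsic, but your identification of the first page with $\Gamma(G_0,\wedge\nu^*\otimes S(A^*_{\iso}))^G$ goes through $J^E G$-invariant forms and depends a priori on $E$. So independence of b) from $E$ does not follow from a) unless you show that this identification is $E$-independent.

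Finally, an invariant section is not in general a sum of products $\alpha\otimes\xi$ with separately invariant factors. The claim ``$\ddR\alpha$ descends to $\nu^*$ because invariant sections are basic'' must therefore be replaced by an argument for the whole invariant tensor.
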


\section{Joining the perspectives}

    In the previous two sections, we have seen two approaches to the cohomology of the Bott-Shulman-Stasheff bicomplex using its filtration by columns. In the cases where both results may be applied, we may hope to compare them.

    \begin{example}
        Consider a transitive and proper Lie groupoid $G$. Then we have already seen in Example \ref{ExTransitiveGrpdBSSViaRuth} that using representations up to homotopy to compute the cohomology of the Bott-Shulman-Stasheff complex yields
        \[
            H(\Tot(\BSS(G)))= (\Gamma(G_0, S^p(A_{\iso}^*)))^G.
        \]
        The isotropy Cartan model describes it as
        \[
            H(\Tot(\BSS(G)))= H \left( (\Gamma(G_0, S^p(A_{\iso}^*)))^G, \dCart \right),
        \]
        but as the Cartan differential is given by the differential of the first page of the associated spectral sequence, it must vanish in analogy to the argument of Example \ref{ExTransitiveGrpdBSSViaRuth}. So both models give the same expression as a result.
    \end{example}
    
    It is a priori not clear if the isomorphisms we have seen in this example agree, even though they look similar. We may note that the similarity of the expressions describing the columns is a general phenomenon, as the same results appear both in the model of Theorem \ref{ThmThesisMainThm} and Corollary \ref{CorColumnsViaRedRuth}. In this section, we aim to compare the two different isomorphisms
	\[
    H(\Omega^p(G_\bullet), \delta) \cong \Gamma(G_0, \wedge^{p-\bullet} \nu^* \otimes S^\bullet(A^*_{\iso}))^G.
    \]
    However, doing so requires some prerequisites.

\begin{lemma} \label{LemmaAdjRuthSplitsforMEC}
    Consider a regular Lie groupoid $G$ with a multiplicative Ehresmann connection $E$. Assume that the connection $\mathcal{D}$ chosen in the construction of the coadjoint action satisfies that $\mathcal{D} \subset E$ as distributions on $G_1$. Then the splitting
    \[
        A = A_{\iso} \oplus (A \cap E).
    \]
    will identify $\Ad$ with the direct sum of the canonical adjoint representation on $A_{\iso}[2]$ and the representation up to homotopy on
    \[
        \mathcal{F}[2] \rightarrow T G_0[1].
    \]
    defined by the inclusion as the differential, the map $R_1^{-1}$ of Definition \ref{DefiAdjointRuth} and its restriction to $\mathcal{F}$ as $R_1^{-2}$ as well as the map $R_2^{-1}$ given by
    \begin{align*}
        G_2 \times TG_0 & \rightarrow \mathcal{F} \\
        (x,y, Y) & \mapsto R^{-1}_2(x)(Y) := Tt(V)-Tt(Z) \in T_{t(x)} G_0,
    \end{align*}
    where for $W \in \mathcal{D}_y$ the unique element satisfying $Ts(W)=Y$ we denote by $V \in \mathcal{D}_x$ the unique element satisfying $Ts(V)=Tt(W)$, and by $Z \in \mathcal{D}_{\mu(x,y)}$ we denote the unique element satisfying $Ts(Z)=Y$.
\end{lemma}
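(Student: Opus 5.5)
We check, component by component, that the structure maps $R_0, R_1, R_2$ of Definition \ref{DefiAdjointRuth} are block-diagonal with respect to the decomposition
\[
A[2] \oplus TG_0[1] = \bigl(A_{\iso}[2]\bigr) \oplus \bigl((A\cap E)[2] \oplus TG_0[1]\bigr).
\]
The first step is to justify the splitting $A = A_{\iso} \oplus (A \cap E)$. At the units, $TG_1|_{G_0} = E|_{G_0} \oplus (\ker Tt \cap \ker Ts)|_{G_0}$, and $A_{\iso} = \ker\rho = (\ker Ts\cap\ker Tt)|_{G_0}$. Intersecting with $A = \ker Ts|_{G_0}$, and using that the second summand already lies in $\ker Ts$, gives $A = (A\cap E) \oplus A_{\iso}$. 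Since $\rho$ is injective on $A\cap E$ with image $\mathcal{F}$ (regularity), we may identify $A\cap E \cong \mathcal{F}$ via $\rho$. Under this identification $R_0=\rho$ becomes $0$ on $A_{\iso}$ and the inclusion $\mathcal{F}\hookrightarrow TG_0$ on the other summand.

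Next comes $R_1$. On $TG_0$ nothing changes, so $R_1^{-1}$ is literally the map of Definition \ref{DefiAdjointRuth}. On $A_{\iso}$, $\rho(Y)=0$, so $Z=0_x$ and $R^{-2}_1(x)(Y)=T\mu(0_x,Y,0_x)$, as in Example \ref{ExAisoStrictRuth}. This conjugation preserves $\ker Ts\cap\ker Tt$ and hence lands in $A_{\iso}$, giving the honest adjoint representation. On $Y\in A\cap E$ we have $Z\in\mathcal{D}_x\subset E_x$, and $Y,0_x\in E$. Since $E$ is a subgroupoid of $TG_1 \rightrightarrows TG_0$, it is closed under $T\mu$ and inversion, so $R^{-2}_1(x)(Y)\in E\cap\ker Ts = A\cap E$. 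Its anchor is $Tt(Z)=R^{-1}_1(x)(\rho(Y))$. Transported via $\rho$, this is exactly the restriction of $R^{-1}_1$ to $\mathcal{F}$, so no cross terms appear.

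Finally, consider $R_2$, which is nonzero only from $TG_0$ to $A$. Here $V,W,Z$ all lie in $\mathcal{D}\subset E$, so $T\mu(V,W)-Z \in E_{\mu(x,y)}$, and translating by $0_{\mu(x,y)^{-1}}$ stays in $E$. The basic curvature therefore lies in $A\cap E$, with no $A_{\iso}$-component. Its image under $\rho\cong Tt$ is
\[
Tt(T\mu(V,W)) - Tt(Z) = Tt(V) - Tt(Z),
\]
which is the stated formula with values in $\mathcal{F}$. The vertical difference $T\mu(V,W)-Z$ is computed in the vector space $T_{\mu(x,y)}G_1$, and right translation is linear, so this is legitimate.

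Block-diagonality of $R_0,R_1,R_2$, with $R_i=0$ for $i>2$ as in the adjoint case, shows that the derivation on $C(G,\Ad)$ splits as a direct sum. Hence both summands are representations up to homotopy, with their structure maps being the restrictions, and the first is strict. The main obstacle is the closure argument: showing that $T\mu$ applied to elements of $E$ together with a vector $Y\in A$ stays in $E$ precisely when $Y\in E$. This uses that elements of $E$ are composable only over matching $TG_0$-data, so I would verify the $s$/$t$-compatibility of the triple $(Z,Y,0_x)$ carefully. It also requires that the splitting is preserved, that is, that the projection to $A_{\iso}$ of $T\mu(Z,Y,0_x)$ depends only on the $A_{\iso}$-component of $Y$, which follows from linearity of $T\mu$ on the VB-groupoid $TG_1$.
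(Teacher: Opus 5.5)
Your proposal is correct and follows essentially the same argument as the paper. Both identify $A\cap E\cong\mathcal{F}$ via $\rho$, then use $\mathcal{D}\subset E$ and the multiplicativity of $E$ to see that $R_1$ preserves both summands and that the basic curvature lands in $A\cap E$, and finish by computing the anchors $Tt(Z)$ and $Tt(V)-Tt(Z)$. The only differences are that the paper phrases the $R_1$ step through the face-map decomposition of the differential on $C(G,\Ad)$, which is equivalent to your block-diagonality check, and that the composability you flag is fine once the right-hand factor in $T\mu(Z,Y,0_x)$ is read as $0_{x^{-1}}$.
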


\begin{proof}
    The differential on $C(G, \Ad)$ decomposes into three components corresponding to $R'_0, R'_1$ and $R'_2$ in the construction of $\Ad$. $R'_0$ corresponds to the differential of the graded vector bundle, which vanishes on $A_{\iso}$. On $A \cap E$, the differential is given by $\rho$, and as $\rho$ precisely corresponds to the isomorphism $A \cap E \cong \mathcal{F}$, it becomes the inclusion under this identification. So we obtain the same data as from the differential $R_0$ as described in the statement.
    
    The component of the differential corresponding to $R'_1$ comes from a sum where each summand corresponds to a face map of the nerve of the groupoid. All but one of the summands retain any splitting by definition. Futhermore, the remaining summand corresponding to $d_0$ is given by applying the values of $R'_1$ depending on the point on which we evaluate. However, the pseudo-action on $A$ defined by $\mathcal{D}$ restricts to an actual action on $A_{\iso}$ and retains $E$-horizontality by the multiplicativity of $E$. Therefore, it also retains $A \cap E$ and we indeed have that all summands constituting the differential respect the splitting. We can additionally check that
    \[
        \rho((R'_1)^{-2}(x)(Y))= Tt(T \mu (Z, Y, 0_x)) = Tt(Z),
    \]
    where $Z \in \mathcal{D}_x$ denotes the unique element satisfying $Ts(Z)=\rho(Y)$, such that we obtain
    \[
        \rho((R'_1)^{-2}(x)(Y))= (R'_1)^{-1}(x)(\rho(Y)).
    \]
    
    Lastly, the component of the differential coming from $R'_2$ measures the failure of $\mathcal{D}$ to be multiplicative and define a honest representation. But as $\mathcal{D} \subset E$ we have that multiplying two $\mathcal{D}$-horizontal sections of $Ts$ will still yield an $E$-horizontal section of $Ts$. Therefore, they can only differ by elements of $\ker(Ts) \cap \nolinebreak E$, which correspond to $A \cap E$. More precisely, in the formulas of Definition \ref{DefiAdjointRuth}, we have that both $T\mu (V,W), Z \in Ts^{-1}(Y) \cap E_{\mu(x,y)} $ and therefore
    \[
        (R'_2)^{-1}(x)(Y) \in (\ker(Ts))_{1_{t(x)}} \cap E = A_{t(x)} \cap E.
    \]
    We can explicitly check
    \begin{align*}
        \rho((R'_2)^{-1}(x)(Y)) & = Tt(T\mu(T \mu(V,W)-Z, 0_{(\mu(x,y))^{-1}})) \\ & = Tt(T \mu(V,W)-Z) = Tt(V) - Tt(Z)
    \end{align*}
    to obtain the map as described.
\end{proof}

\begin{lemma} \label{LemmaProjToNormalRepFromExtension}
    The projection map
    \[
     \left( \mathcal{F}[2] \rightarrow T G_0[1] \right) \rightarrow \nu[1]
    \]
    is a map of representations up to homotopy for the structure of Lemma \ref{LemmaAdjRuthSplitsforMEC} and the normal representation on $\nu$.
\end{lemma}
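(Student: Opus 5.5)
The plan is to use the explicit description of morphisms of split representations up to homotopy. A map $\Phi$ from $E=(\mathcal{F}[2]\to TG_0[1])$ to $\nu[1]$ has components $\Phi_k \in \Gamma(G_k,\Hom((p^k_k)^*E,(p^0_k)^*\nu[1])^{-k})$. I will take $\Phi_0$ to be the projection $\pi: TG_0 \to \nu$ in degree $-1$, extended by zero on $\mathcal{F}$, and set $\Phi_k=0$ for $k\geq 1$. The target $\nu[1]$ is concentrated in a single degree, so it carries only $R_0=0$ and $R_1$ equal to the normal representation. It then suffices to check that $\Phi_0$ intertwines the total differentials on $C(G,E)$ and $C(G,\nu[1])$, i.e.\ the structure equations $\sum_{i+j=n} \Phi_i \circ R_j = \sum_{i+j=n} R'_j \circ \Phi_i$ (with the appropriate signs and pullbacks), componentwise in the arity $n$. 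Since $\Phi_k=0$ for $k \geq 1$ and $R'_j=0$ for $j\neq 1$, only $n=0,1,2$ can contribute.

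The three arities are checked in turn.

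\begin{enumerate}[a)]
    \item \textbf{Arity $0$.} The condition reads $\pi\circ R_0 = 0$, where $R_0$ is the inclusion $\mathcal{F}\hookrightarrow TG_0$. This holds because $\nu = TG_0/\mathcal{F}$. There is also a degree check: $\Phi_0$ applied to the degree $-2$ part $\mathcal{F}$ must land in degree $-2$ of $\nu[1]$, which is zero, so the map is forced to vanish there.
    \item \textbf{Arity $1$.} We need $\pi\circ R_1^{-1}(x) = \lambda^\nu_x \circ \pi$ on $T_{s(x)}G_0$. This is exactly the statement of Example \ref{ExAisoStrictRuth} (cf.\ \cite{dHo12}) that the pseudo-action $R_1^{-1}$ descends to $\nu$ as the normal representation. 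The key point is that $R_1^{-1}$ maps $\mathcal{F}$ into $\mathcal{F}$; this follows from the identity $\rho\circ R_1^{-2}=R_1^{-1}\circ\rho$ shown in the proof of Lemma \ref{LemmaAdjRuthSplitsforMEC}. On the $\mathcal{F}$-component there is nothing to check, since the target vanishes in that degree.
    \item \textbf{Arity $2$.} We need $\pi\circ R_2^{-1}=0$ as a map $s^*TG_0 \to t^*\nu$, where the left side lands in degree $0$ of the target, i.e.\ in $\nu$ via the shift. By Lemma \ref{LemmaAdjRuthSplitsforMEC}, $R_2^{-1}(x,y)(Y)=Tt(V)-Tt(Z)$ lies in $\mathcal{F}$, so it vanishes after projection. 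Equivalently, $R_2^{-1}$ lands in $\mathcal{F}[2]$, which $\Phi_0$ kills.
\end{enumerate}

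The main obstacle is getting the sign and degree conventions of the structure equations right. Concretely, one has to confirm that with $\Phi_0$ of total degree $0$, the arity-$2$ equation really contains the term $\Phi_0\circ R_2$ and no further contributions. It also has to be checked that the face-map summands of $D$ on $C(G,E)$ and $C(G,\nu[1])$ match under pointwise application of $\pi$. They do, because $\pi$ is applied pointwise and commutes with the pullbacks $d_i^*$ for $i\geq1$, and the $d_0$-summand is handled by arity $1$.

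Rather than tracking signs abstractly, I would verify the claim directly on cochains. For $c\in\Gamma(G_k,(p^0_k)^*E)$, write out $D c$ using the formula for the derivation induced by $R_0,R_1,R_2$ and apply $\pi$ pointwise. The $R_0$- and $R_2$-terms then vanish by a) and c), and the remaining terms reproduce the differential of the honest representation $\nu$ by b). Unitality is not needed for this argument.
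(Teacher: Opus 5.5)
Your proposal is correct and follows the paper's argument. Its core is your arity-$1$ check that $R^{-1}_1$ descends along $\pi$ to the normal representation, which is the paper's entire one-line proof (citing \cite{dHo12}); your arity-$0$ and arity-$2$ checks only make explicit what the paper leaves implicit. One small correction: the arity-$2$ condition holds automatically for degree reasons, since $\Phi_0 \circ R_2$ sends the degree $-1$ part to degree $-2$ and $\nu[1]$ is concentrated in degree $-1$, so your phrase ``lands in degree $0$ of the target'' should be fixed.
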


\begin{proof}
    The normal representation is defined precisely by the pseudo-action corresponding to $R^{-1}_1$, which projects to a honest $G$-action on $\nu$ (cf. \cite{dHo12}, Section 3.4).
\end{proof}

\begin{cor} \label{CorInclusionDualResAdIntoDualAd}
    The inclusion $B^* \subset \Ad^*$ induced by the splitting of $A$ associated to a multiplicative Ehresmann connection induces an inclusion of representations up to homotopy.
\end{cor}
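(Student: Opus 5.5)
The plan is to assemble the corollary from the two preceding lemmas, first at the level of $\Ad$ and then dualising. Here $B = A_{\iso}[2] \rightarrow \nu[1]$ is the strict representation up to homotopy of Example \ref{ExAisoStrictRuth}. We fix a multiplicative Ehresmann connection $E$ and choose $\mathcal{D} \subset E$. This is possible because $E$ is a VB-subgroupoid of $TG_1$ with $Ts\colon E_x \rightarrow T_{s(x)}G_0$ surjective, so a complement of $\ker(Ts)\cap E$ inside $E$ gives a connection with respect to $s$. By the Proposition on independence of the choice of connection, this choice costs nothing up to natural isomorphism.

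By Lemma \ref{LemmaAdjRuthSplitsforMEC}, $\Ad$ then splits as a direct sum of representations up to homotopy,
\[
\Ad \cong A_{\iso}[2] \oplus \left( \mathcal{F}[2] \rightarrow TG_0[1] \right).
\]
Composing the projection onto the second summand with the map of Lemma \ref{LemmaProjToNormalRepFromExtension}, and taking the identity on $A_{\iso}[2]$, gives a map of representations up to homotopy $\pi\colon \Ad \rightarrow A_{\iso}[2] \oplus \nu[1] = B$. We must check that the direct sum of the two target structures is exactly $B$. On $A_{\iso}$, the restricted $R_1^{-2}$ is the honest action of Example \ref{ExAisoStrictRuth}. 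The differential $A_{\iso} \rightarrow \nu$ is zero, which matches $B$ since $\rho$ vanishes on $A_{\iso}$. There is also no $R_2$ component into $A_{\iso}$, because by Lemma \ref{LemmaAdjRuthSplitsforMEC} the curvature lands in $A\cap E \cong \mathcal{F}$. So $B$ is strict, and $\pi$ is a componentwise surjective morphism of vector bundles.

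Next we dualise. Morphisms of representations up to homotopy are compatible with dualisation, which sends a structure $(R_0, R_1, R_2)$ to the one built from transposes and inverses; see \cite{AC13}. Hence $\pi^*\colon B^* \rightarrow \Ad^*$ is a morphism of representations up to homotopy. It is fibrewise injective because $\pi$ is fibrewise surjective. Concretely, $\pi^*$ is the inclusion $\nu^* = \mathcal{F}^{\circ} \subset T^*G_0$ together with the inclusion $A_{\iso}^* \subset A^*$ given by the annihilator of $A\cap E$. This is exactly the inclusion induced by the splitting $A = A_{\iso} \oplus (A\cap E)$, which identifies $\pi^*$ with the map in the statement.

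The main obstacle is the dualisation step. A map of representations up to homotopy generally has higher components $\Phi_k$ for $k \geq 1$, and dualising involves $R_1(x^{-1})$, so we must make sure $\pi^*$ has no higher components and that the dual structure on $\nu^*$ is the honest coadjoint-type dual action. The approach is to verify directly that $\pi$ is strict, with $\Phi_k = 0$ for $k \geq 1$. Each of its components is either the identity on a summand or the projection $TG_0 \rightarrow \nu$ intertwining $R_1^{-1}$ with the normal action. Moreover $\pi$ annihilates the image of $R_2$, which lies in $\mathcal{F}$ and is sent to zero in $\nu[1]$ (degree $-2$ maps to nothing). A strict map that commutes with all $R_i$ dualises to a strict map commuting with all dual $R_i$, which completes the argument.
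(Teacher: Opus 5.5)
Your proposal is correct and follows the route the paper intends: the corollary is stated without proof right after Lemma \ref{LemmaAdjRuthSplitsforMEC} and Lemma \ref{LemmaProjToNormalRepFromExtension}, and the implicit argument is to combine the splitting of $\Ad$ with the projection to $\nu[1]$ into a strict, fibrewise surjective morphism $\Ad \rightarrow B$, then dualise. Your additional checks make explicit what the paper leaves implicit: that $\mathcal{D} \subset E$ can be chosen, that the curvature lands in $A \cap E$ and is therefore killed by the projection onto $A_{\iso}$, and that a strict morphism dualises by transposition.
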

    
\begin{lemma} \label{LemmaNaturalEquivarPullbackLinAction}
	Consider a vector bundle $b: V \rightarrow M$. Furthermore consider a fibrewise linear action $L$ of a Lie groupoid $G$ on $V$ that covers an action $\underline{L}$ of $G$ on $p: M \rightarrow G_0$ and an action $K$ of $G$ on $p': N \rightarrow G_0$. Now consider an equivariant map $f: N \rightarrow M$. Then there is a natural linear action of $G$ on $f^* V$ covering the action on $N$ and the map
	\[
	f^*: \Gamma(M, V) \rightarrow \Gamma(N, f^* V)
	\]
	restricts to the equivariant sections
	\[
	f^*: \Gamma(M, V)^G \rightarrow \Gamma(N, f^* V)^G.
	\]
\end{lemma}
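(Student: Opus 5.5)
The plan is to build the action on $f^*V$ directly from $L$ and $K$, check the groupoid action axioms, and then check that $f^*$ sends invariant sections to invariant sections. Write $f^*V = N \times_M V = \{(n,v) : f(n) = b(v)\}$ and use the anchor $p' \circ \pr_N$ to $G_0$. For $g \in G_1$ with $s(g) = p'(n)$, define
\[
(g, (n,v)) \mapsto \bigl(K(g,n), L(g,v)\bigr).
\]
First, this is well defined. Since $L$ covers $\underline{L}$, we have $b(v) = f(n)$ and $p(f(n)) = p'(n) = s(g)$, so $L(g,v)$ is defined. Also $b(L(g,v)) = \underline{L}(g, f(n)) = f(K(g,n))$ by equivariance of $f$, so the pair lies in $f^*V$.

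Next come the axioms, which are inherited componentwise. The unit acts trivially because it does so for both $K$ and $L$. Compatibility with multiplication holds because it holds for $K$ and for $L$ separately. The action is linear on fibres because $L$ is fibrewise linear. It covers $K$ by construction. Smoothness follows because $f^*V$ is an embedded submanifold of $N \times V$, being the fibre product of $f$ with the submersion $b$. The action is the restriction of the smooth map $K \times L$ on $G_1 \times^{s} N \times V$, and it takes values in $f^*V$. Naturality is immediate: the construction commutes with composition of equivariant maps and with the identification $\id^*V = V$.

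Finally, take $\sigma \in \Gamma(M,V)^G$. Invariance here means
\[
L\bigl(g, \sigma(m)\bigr) = \sigma\bigl(\underline{L}(g,m)\bigr)
\]
whenever $s(g) = p(m)$. Then $f^*\sigma(n) = (n, \sigma(f(n)))$, and
\[
g \cdot f^*\sigma(n) = \bigl(K(g,n), \sigma(\underline{L}(g, f(n)))\bigr) = \bigl(K(g,n), \sigma(f(K(g,n)))\bigr) = f^*\sigma\bigl(K(g,n)\bigr),
\]
so $f^*\sigma$ is invariant.

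The only step that needs any care is smoothness and the well-definedness of the fibre product, and even that is routine.
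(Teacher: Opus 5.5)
Your proposal is correct and follows the paper's proof essentially step for step. The paper defines the same action $(g,(n,X)) \mapsto (K(g,n), L(g,X))$, checks well-definedness via $p'(n)=p(f(n))$ and $f(K(g,n))=\underline{L}(g,f(n))$, and takes the action axioms as inherited; your added smoothness remark (fibre product with the submersion $b$) and your explicit invariance computation for $f^*\sigma$ only spell out details the paper leaves implicit.
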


\begin{proof}
	The action of $G$ on $f^* V$ is straightforwardly defined by
	\[
	(g, (n, X)) \mapsto (K(g, n), L(g, X))
	\]
	where by construction we have that $s(g)=p'(n)=p(f(n))$ and $f(n)=\pr_M(X)$, such that the terms are well-defined and furthermore $f(K(g,n))=\underline{L}(g, f(n))$, such that they indeed describe an element of $f^* V$. The action properties are immediately inherited and naturality can be seen from the expressions. It is also clear that a section $\Gamma(N, f^* V)$ is equivariant if and only if the associated map from $N$ to $V$ is, such that the last statement follows.
\end{proof}

\begin{example}
	Consider any vector bundle $V \rightarrow G_0$ with a linear action covering the unique action of $G$ on $\id: G_0 \rightarrow G_0$. We may choose this to be coadjoint action of a Lie groupoid $G$ on $S^b(A^*_{\iso})$ or the conormal action of $G$ on $\wedge^a \nu^*$. Furthermore consider the map
	\[
	p^0_q: G_q \rightarrow G_0
	\]
	mapping an element to its left-most target. This is equivariant with respect to the action of $G$ on $G_q$ by left multiplication, such that we obtain the map
	\[
	(p^0_q)^* : \Gamma(G_0, V)^G \rightarrow \Gamma(G_q, (p^0_q)^* V)^G
	\]
	for any such $V$.
	
	Furthermore, for the induced actions of $G$ on the $(p^0_q)^* V$ we can note that any face map $d_i: G_{q+1} \rightarrow G_q$ with $i>0$ is actually equivariant with respect to the underlying left multiplication, yielding the map
	\[
	d_i^*: \Gamma(G_q, (p^0_q)^*V)^G \rightarrow \Gamma(G_{q+1}, (d_i)^*(p^0_q)^* V)^G.
	\]
	As we can factor
	\[
	p^0_{q+1} = p^0_{q} \circ d_i
	\]
	for any face map with $i>0$, we can identify $(d_i)^*(p^0_q)^* V=(p^0_{q+1})^* V$. That the $G$-actions on these vector bundles agree follows from naturality.
\end{example}

\begin{lemma} \label{LemmaNaturalInvarPullbackLinAction}
	Consider a vector bundle $b: V \rightarrow M$. Furthermore consider an action $K$ of $G$ on $p': N \rightarrow G_0$. Now consider an invariant map $f: N \rightarrow M$. Then there is a natural linear action of $G$ on $f^* V$ covering the action on $N$ and the map
	\[
	f^* : \Gamma(M, V) \rightarrow \Gamma(N, f^* V)
	\]
	has its image contained in $(\Gamma(N, f^* V))^G$.
\end{lemma}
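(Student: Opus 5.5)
The plan is to mirror the proof of Lemma \ref{LemmaNaturalEquivarPullbackLinAction}, with $M$ now carrying no $G$-action at all. Invariance of $f$ means $f(K(g,n)) = f(n)$ whenever $s(g)=p'(n)$. The natural linear action on $f^*V$ should then act trivially on the fibre coordinate:
\[
(g,(n,X)) \mapsto (K(g,n), X).
\]
This is the special case of the construction in Lemma \ref{LemmaNaturalEquivarPullbackLinAction} in which $M$ carries the trivial action. Here the trivial action has to be understood with respect to the groupoid structure; the cleanest route is simply to write down the formula and verify it directly.

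The verification proceeds in three steps.
\begin{enumerate}[1)]
    \item Well-definedness. We need $s(g)=p'(n)$, which is the composability condition for $K$, and the output must lie in $f^*V$. The latter holds because $\pr_M(X)=f(n)=f(K(g,n))$ by invariance.
    \item Action properties. These are inherited from $K$, since the second component is untouched. Linearity in $X$ is clear, and the action covers $K$ by construction.
    \item Naturality. This is immediate from the formula, as in Lemma \ref{LemmaNaturalEquivarPullbackLinAction}.
\end{enumerate}

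For the statement about images, take $\sigma \in \Gamma(M,V)$. Its pullback is the section $n \mapsto (n, \sigma(f(n)))$. Acting with $g$ gives $(K(g,n), \sigma(f(n))) = (K(g,n), \sigma(f(K(g,n))))$, again by invariance of $f$. This is exactly the value of $f^*\sigma$ at $K(g,n)$, so $f^*\sigma$ is an equivariant section, i.e. lies in $(\Gamma(N,f^*V))^G$.

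No real obstacle is expected. The only point requiring care is to check that the trivial-on-fibres action is a genuine groupoid action on the pulled-back bundle. Concretely, this means the unit acts as the identity and composites act compatibly, both of which follow from the corresponding properties of $K$. If one prefers to deduce the result from Lemma \ref{LemmaNaturalEquivarPullbackLinAction}, one has to produce a $G$-action on $M$, and $M$ need not map to $G_0$. For that reason the direct verification above is the preferred route.
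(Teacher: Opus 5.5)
Your proposal is correct and follows essentially the same route as the paper. The paper also uses the fibre-trivial action $(g,(n,X))\mapsto(K(g,n),X)$, checks well-definedness via invariance of $f$, and says the action properties are inherited from $K$. It then observes that $f^*\sigma$ is equivariant because it corresponds to the invariant map $n\mapsto\sigma(f(n))$; your direct computation $g\cdot(f^*\sigma)(n)=(f^*\sigma)(K(g,n))$ spells out exactly this.
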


\begin{proof}
	The action of $G$ on $f^* V$ is straightforwardly defined by
	\[
	(g, (n, X)) \mapsto (K(g, n), X)
	\]
	where by construction we have that $f(K(g,n))=f(n)=\pr_M(X)$, such that the expression indeed defines an element of $f^* V$. The action properties are immediately inherited and naturality can be seen from the expressions. It is also clear that an element of $\Gamma(N, f^* V)$ is equivariant only if it corresponds to an invariant map from $N$ to $V$ and that precomposition maps $\Gamma(M, V)$ to precisely that space.
\end{proof}

\begin{example}
	Consider any vector bundle $V \rightarrow G_0$. We may choose this to be coadjoint action of a Lie groupoid $G$ on $S^b(A^*_{\iso})$ or the conormal action of $G$ on $\wedge^a \nu^*$. Furthermore consider the map
	\[
	p^0_q : G_q \rightarrow G_0
	\]
	mapping an element to its left-most target. This is invariant with respect to any of the actions of $G$ on $G_q$ as in Example \ref{ExFamilyActionsOnNerve} with the exception of left multiplication. Therefore, we get a family of actions of $G$ on $(p^0_q)^* V$.
\end{example}

\begin{theorem} \label{ThmMainTwoIsosAgree}
    Let $G$ be a regular and proper Lie groupoid. The two isomorpisms
    \[
    H(\Omega^p(G_\bullet), \delta) \cong \Gamma(G_0, \wedge^{p-\bullet} \nu^* \otimes S^\bullet(A^*_{\iso}))^G
    \]
    given by Corollary \ref{CorColumnsViaRedRuth} and Theorem \ref{ThmThesisIsoCartInclusionToColumns} agree up to scaling in each degree by $\pm 1$.
\end{theorem}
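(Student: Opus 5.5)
To compare the two isomorphisms, I will realise both by explicit cochain maps into $(\Omega^p(G_\bullet),\delta)$ starting from the same space $\Gamma(G_0, \wedge^{p-q} \nu^* \otimes S^q(A^*_{\iso}))^G$, and then compare these maps at the cochain level.

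First I fix a multiplicative Ehresmann connection $E$ and a connection $\mathcal{D}\subset E$ on $G_1$ with respect to $s$. Such a $\mathcal{D}$ exists because $E$ surjects onto $TG_0$ under $Ts$. By the Proposition on independence of $\mathcal{D}$, this choice is harmless.

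With this choice, Lemma \ref{LemmaAdjRuthSplitsforMEC} and Corollary \ref{CorInclusionDualResAdIntoDualAd} make the inclusion $S^p(B^*)\subset S^p(\Ad^*)$ a map of representations up to homotopy. On cohomology bundles it induces the identity of $\mathcal{H}(S^p(\Ad^*))=S^p(B^*)$. Indeed, $B^*$ is the sum of $A_{\iso}^*$ and $\nu^*$, the latter obtained via Lemma \ref{LemmaProjToNormalRepFromExtension}, and these are exactly the cohomology of the dual adjoint complex. Hence this inclusion is a splitting of the quasi-isomorphism $\Phi$ of Theorem \ref{ThmCohomologyRegularRuths}.

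Next, by Lemma \ref{LemmaCohomStrictRuths}, the Ruth-side isomorphism sends an invariant section $\omega\in\Gamma(G_0,\wedge^{p-q}\nu^*\otimes S^q(A^*_{\iso}))^G$ to the class of the degree-zero cochain $\omega\in\Gamma(G_0,S^p(B^*))\subset C^{0}(G,S^p(\Ad^*))$, suitably shifted. So the Ruth-side composite is: regard $\omega$ as a $0$-cochain of $S^p(\Ad^*)$, then apply Arias Abad's quasi-isomorphism from Theorem \ref{ThmCoadRuthComputesColumns} between $C(G,S^p(\Ad^*))$ and $\Omega^p(G_\bullet)$.

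The key step is to make this last map explicit. I would recall from \cite{Abad08} that it is built from the identification of $\Omega^p(G_q)$ with sections of $S^p$ of the conormal-type bundle of $G_q$, using $\mathcal{D}$ to split $TG_q$ into $s$-vertical directions (copies of $A$ pulled back by the face/translation maps) and $\mathcal{D}$-horizontal directions pulled back from $TG_0$. Since $\mathcal{D}\subset E$, this splitting refines the $E$-splitting used in \cite{KT25}. Evaluated on a $0$-cochain $\omega$ and landing in $\Omega^p(G_q)$, the form obtained should be the pullback along $p^0_q$ of $\omega$ composed with the symmetrisation. Concretely, the $S^q(A^*_{\iso})$-part pairs with the $q$ isotropy-vertical directions of $G_q$, one for each arrow, and the $\nu^*$-part pairs with the horizontal directions. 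Lemma \ref{LemmaNaturalEquivarPullbackLinAction}, Lemma \ref{LemmaNaturalInvarPullbackLinAction} and the subsequent examples show that such pullbacks are invariant under all the actions of Example \ref{ExFamilyActionsOnNerve}. Therefore the resulting form is $J^E G$-invariant, and its restriction to the unit section is the symmetrised element used to define $J^E G$-isotropy Cartan forms.

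That is precisely the inclusion of Theorem \ref{ThmThesisIsoCartInclusionToColumns}, up to the normalisation constants of the symmetric-to-exterior inclusion and the sign conventions of $\delta$ and of the degree shift. These contribute a factor $\pm1$ depending only on $(p,q)$; any further positive scalars are absorbed into the normalisation of the symmetrisation.

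The main obstacle is this explicit comparison of Arias Abad's quasi-isomorphism with the $J^E G$-invariant forms. The map in \cite{Abad08} is constructed through a double complex and homotopy transfer, so it is not a priori a single pullback. I would first try to show that on $0$-cochains of $S^p(B^*)$ the higher correction terms vanish. These terms involve $R_2$, which by Lemma \ref{LemmaAdjRuthSplitsforMEC} takes values in $\mathcal{F}$, and $\mathcal{F}$ pairs trivially with $\nu^*$ and $A^*_{\iso}$. If this works, only the leading term survives and the comparison reduces to a pointwise check at the unit section.
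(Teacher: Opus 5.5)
\paragraph{Main gap: the comparison with Theorem \ref{ThmCoadRuthComputesColumns}.} Your setup is sound apart from one missing justification, addressed at the end. The real gap is the step you flag as the main obstacle, and your proposed mechanism targets the wrong object. Theorem \ref{ThmCoadRuthComputesColumns} is not realised by a chain map $C(G,S^p(\Ad^*))\to\Omega^p(G_\bullet)$ built from a $\mathcal{D}$-splitting of $TG_q$ plus correction terms. It is a zigzag through an augmented bicomplex:
\begin{itemize}
\item the columns are $C(G^{(k)},S^p(\Ad^*))$ for the action groupoids $G^{(k)}=G\ltimes G_{k+1}$;
\item the horizontal maps $\flat^*$ are alternating sums of pullbacks along the $d_j$, $j>0$;
\item $C(G,S^p(\Ad^*))$ sits on the left via $\flat_0^*$, and $\Omega^p(G_\bullet)$ sits at the bottom via $d_0^*$.
\end{itemize}
Going from a $0$-cochain of symmetric degree $b$ to a form on $G_b$ means solving a staircase of $b+1$ equations. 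This staircase is there even for a strict representation up to homotopy. So showing that $R_2$-terms vanish against $\nu^*\oplus A^*_{\iso}$ does not address the problem: it only re-proves Corollary \ref{CorInclusionDualResAdIntoDualAd}. Your claim that the output ``should be'' the $J^EG$-invariant extension of the pullback of $\omega$ is just the theorem restated.

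\paragraph{What is needed: computing the staircase.} The paper does this by carrying $J^EG$-invariance into every column of the bicomplex.
\begin{itemize}
\item It builds $J^1G$-actions on $(p^0_q)^*S^p(\Ad^*)$ over $G^{(k)}_q=G_{k+q+1}$. These cover the families of actions of Example \ref{ExFamilyActionsOnNerve}, via Lemmas \ref{LemmaNaturalEquivarPullbackLinAction} and \ref{LemmaNaturalInvarPullbackLinAction}.
\item It uses jets at units in $\overline{J^EG}$ to annihilate the $A\cap E$-directions and the $E$-horizontal $\mathcal{F}$-directions. This identifies the invariant part of the $k$-th column with $\Gamma(G_0,S^p(\nu^*\oplus(A^*_{\iso})^{\oplus k+1}\to A^*_{\iso}))^G$.
\item It checks that $\flat^*$ and $D$ preserve these subcomplexes and computes them explicitly. $D^1$ is alternately $0$ and $\id$, and $D^0$ sends $A_{\iso}$ into the first copy.
\item Crucially, $\flat^*$ is extension by zero in the first copy of $A^*_{\iso}$. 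This holds because the full alternating sum of face maps vanishes on the $\Sigma$-symmetric elements (Lemma 3.1.11 and Corollary 3.1.12 of \cite{KT25}). This is where the symmetry of isotropy Cartan forms enters essentially, and your proposal never uses it.
\end{itemize}
Only then can one write down $\alpha_{-1},\dots,\alpha_{b-1}$ with $D\alpha_{k+1}=\flat^*\alpha_k$ and $d_0^*\alpha=\flat^*\alpha_{b-1}$, which gives $\alpha_{-1}=\pm\alpha$. Also, the statement asserts agreement up to $\pm1$. You therefore cannot absorb positive scalars into a normalisation; you must show they equal $1$ for the fixed inclusion $S^q(A^*_{\iso})\subset\wedge^q((A^*_{\iso})^{\oplus q})$.

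\paragraph{Smaller point: the $\Phi$ step.} With an arbitrary $\mathcal{D}\subset E$, the fact that $S^p(B^*)\subset S^p(\Ad^*)$ induces the identity on cohomology bundles does not by itself show that it inverts $\Phi$ on cohomology. There are two ways to close this.
\begin{itemize}
\item The paper additionally makes $\mathcal{D}$ preserve a complement $\nu$ of $\mathcal{F}$. Then $\Ad=B\oplus(\mathcal{F}[2]\xrightarrow{\id}\mathcal{F}[1])$ and the perturbation data are strict.
\item Alternatively, argue by filtration: for a $0$-cochain $\omega$ of $S^p(B^*)$, $\Phi(\omega)-\omega$ is a cocycle in positive simplicial degree, hence exact by properness.
\end{itemize}
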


\begin{proof}
    Let us recall the proof of \cite{AC13} on the isomorphism
    \[
    H(\Omega^p(G_\bullet), \delta) \cong H(G, S^p(\Ad^*)).
    \]
    It is given by an augmented bicomplex as follows:

    \begin{center}
    \adjustbox{scale=0.8,center}{
        \begin{tikzcd}
             & \vdots & \vdots & \vdots &  \\
            0 \arrow[r] & C(G, S^p(\Ad^*)[p])^2 \arrow[r, "\flat^*_0"], \arrow[u, "D"] & C(G^{(0)}, S^p(\Ad^*)[p])^2 \arrow[r, "\flat^*"] \arrow[u, "D"] & C(G^{(0)}, S^p(\Ad^*)[p])^2 \arrow[r, "\flat^*"] \arrow[u, "D"] & \dots \\
            0 \arrow[r] & C(G, S^p(\Ad^*)[p])^1 \arrow[r, "\flat^*_0"], \arrow[u, "D"] & C(G^{(0)}, S^p(\Ad^*)[p])^1 \arrow[r, "\flat^*"] \arrow[u, "D"] & C(G^{(0)}, S^p(\Ad^*)[p])^1 \arrow[r, "\flat^*"] \arrow[u, "D"] & \dots \\
            0 \arrow[r] & C(G, S^p(\Ad^*)[p])^0 \arrow[r, "\flat^*_0"], \arrow[u, "D"] & C(G^{(0)}, S^p(\Ad^*)[p])^0 \arrow[r, "\flat^*"] \arrow[u, "D"] & C(G^{(0)}, S^p(\Ad^*)[p])^0 \arrow[r, "\flat^*"] \arrow[u, "D"] & \dots \\
             &  & \Omega^p(G_0) \arrow[r, "\delta"] \arrow[u, "d_0^*"] & \Omega^p(G_1) \arrow[r, "\delta"] \arrow[u, "d_0^*"] & \dots \\
             &  & 0 \arrow[u] & 0 \arrow[u] &  
        \end{tikzcd}
    }
    \end{center}

    where the groupoids $G^{(i)}$ are constructed as the action groupoids of the action of $G$ on $G_{i+1}$. We can identify
    \[
        G^{(i)}_n = G_{i+n+1}.
    \]
    The horizontal arrows $\flat^*$ are given by a sum of maps corresponding to the face maps $d_j: G_{n+2} \rightarrow G_{n+1}$ for $j>0$, which are equivariant with respect to the action of $G$. The associated maps $G^{(i)}_{n+1} \rightarrow G^{(i)}_n$ on the nerve correspond to the face maps $d_{j+i}$. The vertical differential $D$ in the bicomplex is the differential of the representation up to homotopy.

    Now, we aim to construct an invariant subcomplex of the given complex, such that its horizontal augmenting complex is given by the $J^E G$-isotropy Cartan forms and its vertical augmenting complex is of the form
    \[
        (\Gamma(G_\bullet, (p^0_q)^*S^p(\Ad^*)))^{J^E G} \subset \Gamma(G_\bullet, (p^0_q)^*S^p(\Ad^*)) = C^\bullet(G, S^p(\Ad^*)).
    \]
    Let us now make sense of this notion:

    We have that the action of $J^1 G$ on the vector spaces of $\Ad$ and $\Ad^*$ (by (co)adjoint action and trivially induced action) defines an action on the graded vector bundle of the representation up to homotopy covering the trivial action on $G_0$, as $\rho$ is equivariant. This will also induce an action on the underlying graded vector bundle of $S^p(\Ad^*)$ compatible with its differential. If we restrict this action along the inclusion, we can make sense of the expression
    \[
        (\Gamma(G_0, S^p(\Ad^*)))^{J^E G}.
    \]
    
    For $G_q$ with $q>0$, we aim to construct a family of $q+1$ actions of $J^1 G$ on $(p^0_q)^* S^p(\Ad^*)$ covering the family of actions of $G$ on some $G_q$ as described in Example \ref{ExFamilyActionsOnNerve} (which we can extend trivially to actions of $J^1 G$ on $G_q$). However, as $p^0_q$ is either equivariant or invariant with respect to these actions, we obtain the required data by Lemma \ref{LemmaNaturalEquivarPullbackLinAction} and \ref{LemmaNaturalInvarPullbackLinAction}. We will restrict the actions along the inclusion $J^E G \subset J^1 G$, which yields the well-defined inclusion
    \[
    (\Gamma(G_\bullet, (p^0_q)^*S^p(\Ad^*)))^{J^E G} \subset \Gamma(G_\bullet, (p^0_q)^*S^p(\Ad^*)) = C^\bullet(G, S^p(\Ad^*)).
    \]
    Here, the invariance denotes invariance under the entire family of actions. Note that invariance under $J^E G$ implies invariance under its closure
    \[
    \overline{(J^E G)_1} = \{ V_x \subset E_x \text{ s.t. } Ts|_{V_x} \text{ is iso} \}
    \]
    (which is a well-defined manifold, cf. \cite{KT25}, Lemma 2.1.5). By considering these generalised jets at units, we can note that for any $X \in A \cap E \subset A$ as well as every $Y \in \mathcal{F} \subset TG_0$, there is a generalised jet annulling this element. We can therefore see that
    \[
    (\Gamma(G_q, (p^0_q)^*S^p(\Ad^*)))^{J^E G} = (\Gamma(G_q, (p^0_q)^*S^p(B^*)))^{J^E G} = (\Gamma(G_q, (p^0_q)^*S^p(B^*)))^{G}
    \]
    where the inclusion of $B^*$ into $\Ad^*$ is along the splitting
    \[
    A = A_{\iso} \oplus (A \cap E)
    \]
    and the fact that the action of $J^E G$ (and even $J^1 G$) on $B$ descends to $G$ follows directly from its definition. Furthermore, we can simplify
    \[
        (\Gamma(G_\bullet, (p^0_q)^*S^p(B^*)))^{G} = (\Gamma(G_0, S^p(B^*)))^{G}
    \]
    by restricting to the values on the unit section, as these uniquely define the values at all points implicitly. The invariance under the family of actions on the left transforms to the coadjoint action on the right, as the family of (pairwise commutative) $G$-actions on the left hand side can only map an element on the unit section to the unit section by acting with the same element on the left, right and all diagonals, which corresponds to the coadjoint action by construction.
    
    We can similarly proceed for the groupoids $G^{(k)}$. Here, for each $G^{(k)}_q$ we aim to construct a family of $k+q+2$ actions, covering the actions of Example \ref{ExFamilyActionsOnNerve} using the identification $G^{(k)}_q = G_{k+q+1}$. Note, that as $G^{(k)}$ is an action groupoid of $G$, its algebroid $A^{(k)}$ is a pull-back of $A$ along the map $G_{k+1} \rightarrow G_0$ along which $G$ acts, which is $p^0_{k+1}$. So actually, we obtain that
    \[
        (p^0_q)^* A^{(k)} = (p^0_{q+k+1})^* A.
    \]
    This means that we can use the same actions as constructed before to define the actions on $A^{(k)}$ and its pull-backs.
    
    It remains to construct the corresponding actions on the bundles $(p^0_q)^* T G^{(k)}_0$. On $G^{(k)}_0$, we have that by the induced action property, all of the actions of $G$ on $G_{k+1}$ induce an action of $J^1 G$ on $T G^{(k)}_0$. We can check that for all of the actions of $G$ on $G_{k+q+1}$, the map to $G_{k+1}$ corresponding to $p^0_q$ in $G^{(k)}$ is either equivariant or invariant with respect to one of the actions on $G_{k+1}$, allowing us to construct an induced action of $J^1 G$ on $(p^0_q)^* T G^{(k)}_0$. As pull-backs and symmetric powers commute, we have now well-defined actions on the vector bundles $(p^0_q)^* S^p(\Ad^*) \rightarrow G^{(k)}_q$ covering the $k+q+2$ actions of $G$ on $G_{k+q+1}$.

    If we analyse again the action of $\overline{J^E G}$ on $\Gamma(G^{(k)}_\bullet, (p^0_q)^*S^p(\Ad^*))$, in particular the action of the jets at the units, we can see that elements of $(p^0_k)^* (A \cap E) $ over $G^{(k)}_0$ and their corresponding elements in the vector bundle over $G^{(k)}_q$ get anulled. This also happens for those elements of $(Tp^i_k)^{-1}(\mathcal{F}) \subset T G^{(k)}_0$ (this expression is independent of $i$, cf. \cite{KT25}, Proposition 1.1.16) which are $E$-horizontal under all projections $G^{(0)}_k = G_{k+1} \rightarrow G_1$. Therefore,
    \[
    (\Gamma(G^{(k)}_q, (p^0_q)^*S^p(\Ad^*)))^{J^E G} = (\Gamma(G^{(k)}_q, (p^0_q)^*S^p((B^{(k)})^*)))^{J^E G},
    \]
    where
    \[
        B^{(k)} = \left( (p^0_{k+1})^* A_{\iso}[2] \rightarrow ((p^0_k)^*\nu \oplus (p^0_{k+1})^* (A_{\iso}) \oplus \dots \oplus (p^{k}_{k+1})^* (A_{\iso}))[1]  \right)
    \]
    and the projection from $\Ad$ to $B^{(k)}$ is the one induced by the quotient map dividing out the anulled subspaces. The structure of a direct sum in degree -1 comes from the data of the multiplicative Ehresmann connection describing a lift of the normal bundle on $G_0$ to $T G^{(k)}_0$ up to elements of $(Tp^i_k)^{-1}(\mathcal{F})$ which satisfy the $E$-horizontality condition. In the quotient of $T G^{(k)}_0$ by the vanishing set this lift determines a complement of the joint kernel of the $Tp^i_q$ yielding a direct sum decomposition and moreover the projections from $G_{k+1}$ to the different copies of $G_1$ split that joint kernel further into the $k+1$ components. The differential of $B^{(k)}$ is given by the identity on the component $(p^0_{k+1})^* A_{\iso}$. The induced representation up to homotopy on $B^{(k)}$ is strict and corresponds (using the action groupoid structure) to the pull-back of adjoint representation on $A_{\iso}$ and the normal representation on $\nu$ along $p^0_{k+1}$ as well as the trivial representation on the other copies of $A_{\iso}$.

    Note, that this means that we can also interpret $B^{(k)}$ as the direct sum of
    \[
        \left( (p^0_{k+1})^* A_{\iso}[2] \rightarrow (p^0_{k+1})^* A_{\iso}[1] \right) \oplus ((p^0_k)^*\nu \oplus \dots \oplus (p^{k}_{k+1})^* (A_{\iso}))[1].
    \]
    
    Again, the actions of $J^E G$ descend to actions of $G$ and we obtain
    \begin{align*}
        (\Gamma(G^{(k)}_q, (p^0_q)^*S^p((B^{(k)})^*)))^{J^E G}& =(\Gamma(G^{(k)}_q, (p^0_q)^*S^p((B^{(k)})^*)))^{G} \\ & = (\Gamma(G_0, S^p(\nu^* \oplus (A^*_{\iso})^{\oplus k+1} \rightarrow A^*_{\iso})))^{G}.
    \end{align*}
    
    We can note that all horizontal arrows in the augmented bicomplex of \cite{AC13} as above retain the invariant subcomplexes. This is because the maps $\flat$ are comprised of sums of maps corresponding to face maps, which are each either equivariant or invariant with respect to any action on the target, such that Lemma \ref{LemmaNaturalEquivarPullbackLinAction} and Lemma \ref{LemmaNaturalInvarPullbackLinAction} apply. The vertical differentials also retain the invariant subcomplexes. This is because they are in the image of $C(G^{(k)}, (B^{(k)})^*)$ and therefore in analogy to Corollary \ref{CorInclusionDualResAdIntoDualAd}, we can compute the differential by computing it on the smaller subcomplex. However, the differential on its underlying graded vector bundle is trivially equivariant with respect to all actions and as the representation up to homotopy is strict we can use that it otherwise only consists of a sum of either face maps or the action map of the representation. These, as well, are maps that are each either equivariant or invariant with respect to any action on the domain.
    
    Therefore, we obtain the subcomplex of the augmented bicomplex starting as follows

    \begin{center}
    \adjustbox{scale=0.8,center}{
        \begin{tikzcd}[column sep = tiny]
             & \vdots & \vdots  \\
            0 \arrow[r] & (\Gamma(G_0, \wedge^{p-2}(\nu^*) \otimes S^2(A^*_{\iso})))^{G} \oplus \dots \oplus \cdots \arrow[r, "\flat^*_0"], \arrow[u, "D"] & (\Gamma(G_0, \wedge^{p-2}(\nu^* \oplus A^*_{\iso}) \otimes S^2(A^*_{\iso})))^{G} \oplus \cdots \arrow[u, "D"] \\
            0 \arrow[r] & (\Gamma(G_0, \wedge^{p-1}(\nu^*) \otimes A^*_{\iso}))^{G} \oplus (\Gamma(G_0, \wedge^p(\nu^*)))^{G} \arrow[r, "\flat^*_0"] \arrow[u, "D"] & (\Gamma(G_0, \wedge^{p-1}(\nu^* \oplus A^*_{\iso}) \otimes A^*_{\iso}))^{G} \oplus \cdots \arrow[u, "D"] \\
            0 \arrow[r] & (\Gamma(G_0, \wedge^p(\nu^*)))^{G} \arrow[r, "\flat^*_0"], \arrow[u, "D"] & (\Gamma(G_0, \wedge^p(\nu^* \oplus A^*_{\iso})))^{G} \arrow[u, "D"] \\
             &  & \Omega^p(G_0) \arrow[u, "d_0^*"]  \\
             &  & 0 \arrow[u] 
        \end{tikzcd}
    }
    \end{center}
    with columns in general given by

    \begin{center}
    \adjustbox{scale=0.8,center}{
        \begin{tikzcd}[column sep = small]
            & \vdots &  \\
            \dots \arrow[r, "\flat^*"] & (\Gamma(G_0, \wedge^{p-2}(\nu^* \oplus (A^*_{\iso})^{\oplus k+1}) \otimes S^2(A^*_{\iso})))^{G} \oplus \dots \oplus \dots \arrow[r, "\flat^*"] \arrow[u, "D"] & \dots \\
            \dots \arrow[r, "\flat^*"] & (\Gamma(G_0, \wedge^{p-1}(\nu^* \oplus (A^*_{\iso})^{\oplus k+1}) \otimes A^*_{\iso}))^{G} \oplus (\Gamma(G_0, \wedge^p(\nu^* \oplus (A^*_{\iso})^{\oplus k+1})))^{G} \arrow[r, "\flat^*"] \arrow[u, "D"] & \dots \\
            \dots \arrow[r, "\flat^*"] & (\Gamma(G_0, \wedge^p(\nu^* \oplus (A^*_{\iso})^{\oplus k+1})))^{G} \arrow[r, "\flat^*"] \arrow[u, "D"] & \dots \\
             \dots \arrow[r, "\delta"] & \Omega^p(G_k) \arrow[r, "\delta"] \arrow[u, "d_0^*"] & \dots \\
              & 0 \arrow[u] &  
        \end{tikzcd}
    }
    \end{center}
    where each component of the direct sum in each degree is repeated in the direct sum of the next degree, where the first time a component appears, it corresponds to a section $G^{(k)}_0$, while in higher degrees it corresponds to a section on $G^{(k)}_i$.

    In particular, all vector spaces corresponding to a fixed total degree $q$ have a subcomplex (contained in the part corresponding to sections of $G^{(k)}_0$) of the form
    \[
        (\Gamma(G_0, \wedge^{p-q}(\nu^*) \otimes S^q(A^*_{\iso})))^G,
    \]
    where we use the inclusion
    \[
        S^q(A^*_{\iso}) \subset S^{k+1}(A^*_{\iso}) \otimes S^{q-k-1}(A^*_{\iso})
    \]
    as well as
    \[
        S^{k+1}(A^*_{\iso}) \subset \wedge^{k+1}((A^*_{\iso})^{\oplus k+1}).
    \]

    Now let us analyse the differentials. As we are considering the differential of a strict representation up to homotopy, the vertical differential $D$ decomposes into $D^0$, induced by the anchor map, and $D^1$ given by a sum corresponding to face maps and the action map. As long as we are considering invariant sections, all of the components of the sum correspond to the identity on each $(\Gamma(G_0, \wedge^{a}(\nu^* \oplus (A^*_{\iso})^{\oplus k+1}) \otimes S^b(A^*_{\iso})))^{G}$. As the sum is alternating, we will therefore obtain that $D^1=0$, when it corresponds to some map of the form $\Gamma(G^{(k)}_q, (p^0_q)^*S^p((B^{(k)})^*)) \rightarrow \Gamma(G^{(k)}_{q+1}, (p^0_{q+1})^*S^p((B^{(k)})^*))$ for $q$ even. When $q$ is odd, we will obtain $D^1= \id$. The $D^0$ component will be induced by mapping $A_{\iso}$ to the first copy in the direct sum $(A_{\iso})^{\oplus k+1}$ and in particular will map elements corresponding to
    \[
        (\Gamma(G_0, \wedge^{p-q}(\nu^*) \otimes S^q(A^*_{\iso})))^G,
    \]
    under the identification $S^q(A^*_{\iso}) \subset S^{k+1}(A^*_{\iso}) \otimes S^{q-k-1}(A^*_{\iso})$ to their corresponding expressions under the identification $S^q(A^*_{\iso}) \subset S^{k}(A^*_{\iso}) \otimes S^{q-k}(A^*_{\iso})$, where $S^{k}(A^*_{\iso}) \subset \wedge^{k}((A^*_{\iso})^{\oplus k+1})$ as elements of degree $0$ in the first copy and degree $1$ in all others invariant under any permutation that fixes the first copy.

    Now let us consider the horizontal differential. Firstly, we can check that each of the summands of $\flat$ corresponds to the identity on the degree -2 part (i.e. $(p^0_k)^*A_{\iso}[2]$) of the $B^{(k)}$. Therefore it is sufficient to analyse $\flat^*$ and its components at the unit sections on the expressions $\Gamma(G_0, \wedge^p(\nu^* \oplus (A_{\iso}^*)^{\oplus k+1}))$. Note that as the face maps defining $\flat^*$ restrict to the unit sections, the invariance is not necessary to obtain a well-defined map on the values at the unit section under the pull-back along face maps. The components of $\flat^*_i$ correspond to the family of face maps $d_i: G_{k+2} \rightarrow G_{k+1}$ with $i>0$ and the pull-back maps are added alternatingly. In \cite{KT25} (cf. Lemma 3.1.11; Corollary 3.1.12), there is an explicit analysis of the face maps on these expressions showing in particular that the alternating sum of all face maps (including $(d_0)^*$) vanishes on expressions of the form
    \[
        \Gamma(G_0, \wedge^{a}(\nu^*) \otimes S^{k+1}(A^*_{\iso})) \subset \Gamma(G_0, \wedge^{a}(\nu^*) \otimes \wedge^{k+1}((A^*_{\iso})^{\oplus k+1})).
    \]
    Furthermore, we can explicitly compute that $d_0$ corresponds to the inclusion of $(A^*_{\iso})^{k+1}$ into $(A^*_{\iso})^{k+2}$ by extending by $0$ in the first copy of $A^*_{\iso}$. By the choice of signs in \cite{AC13}, we find that $\flat^*$ corresponds to
    \[
        \sum_{i=1}^{k+2} (-1)^{i+1} d_i^* = d_0^* - \sum_{i=0}^{k+2} (-1)^{i} d_i^*
    \]
    and as the map induced by the second summand vanishes, $\flat^*$ corresponds to extension by 0 in the first summand of $(A^*_{\iso})^{k+2}$ as well. The same argument holds for $\flat^*_0$, which is analogously constructed.

    One consequence of this is, that for any $J^E G$-isotropy Cartan form with values at the unit section given by
    \[
        \alpha \in (\Gamma(G_0, \wedge^{a}(\nu^*) \otimes S^b(A^*_{\iso})))^G
    \]
    the associated elements
    \[
        \alpha_k \in (\Gamma(G_0, \wedge^{a+k+1}(\nu^* \oplus (A^*_{\iso})^{\oplus k+1}) \otimes S^{b-k-1}(A^*_{\iso})))^{G}
    \]
    for $-1 \leq k \leq b-1$ satisfy that
    \[
        D(\alpha_{k+1}) = \flat(\alpha_k),
    \]
    as well as
    \[
        (d_0)^*(\alpha)= \flat(\alpha_{b-1}).
    \]
    
    The correspondence between the two augmenting complexes is now obtained by considering the quasi-isomorphisms between their cohomology and the cohomology of the associated total complex of the entire structure. Note that in \cite{AC13}, the notion of augmented bicomplex used requires commuting differentials. Therefore, we must choose a convention on which differentials to multiply by $-1$ to construct anti-commuting differentials. All of the complexes constructed this way are isomorphic, with the isomorphisms on each vector space given by multiplication with $\pm 1$. Therefore, when iteratively constructing a closed element of the total complex corresponding to $\alpha$, we will obtain
    \[
        \alpha + \sum_{k=-1}^{b-1} \pm \alpha_k,
    \]
    where the sign of $\alpha_{-1}$ will only depend on $b$ (and the choice of convention). Therefore, indeed, the cohomology class $[\alpha]$ represented by a unique $J^E G$-isotropy Cartan form $\alpha$ will under the isomorphism of \ref{ThmCoadRuthComputesColumns} correspond to the cohomology class $[\alpha_{-1}]$, where $\alpha$ and $\alpha_1$ correspond to the same expression up to scaling by $\pm 1$ when abstractly written as elements
    \[
        \alpha, \alpha_{-1} \in (\Gamma(G_0, \wedge^{a}(\nu^*) \otimes S^{b}(A^*_{\iso})))^{G}.
    \]
    
    In the last step of this proof, we now need to consider the isomorphism
    \[
    H(G, S^p(\Ad^*)) \cong H(G, \mathcal{H}(S^p(\Ad^*)))= H(G, S^p(B^*))
    \]
    that we used in Corollary \ref{CorColumnsViaRedRuth}. It is defined using the perturbation Lemma. We will first study what happens when the perturbation lemma is applied for the projection induced by the splitting
    \[
    A = A_{\iso} \oplus (A \cap E) \cong A_{\iso} \oplus \mathcal{F}
    \]
    and the inclusion induced by a splitting
    \[
    T G_0 = \mathcal{F} \oplus \nu.
    \]
    
    While we don't impose any condition on $\nu$ itself at this point, however, we require for the next step that $\nu$ is retained by the connection $\mathcal{D}$ chosen for the construction of the adjoint representation up to homotopy. This can be done using different possible constructions. For example, we can choose an arbitrary $\nu$ and $\mathcal{D}'$ and then explicitly construct $\mathcal{D}$ by adding to the vectors of $\mathcal{D}'$ the terms expressing the respective failure of $\mathcal{D}'$ to retain $\nu$. We may alternatively use the notion of a Riemannian groupoid as in \cite{dHF18} using that the groupoid is proper to construct $\nu$ and $\mathcal{D}$ simultaneously, choosing $\nu$ to be specifically the complement to $\mathcal{F}$.    
    
    If we now additionally assume that $\mathcal{D}$ is furthermore contained in $E$ (we can assure this by projecting all vectors of $\mathcal{D}$ to their $E$-horizontal counterparts, which retains the action on $T G_0$ and in particular $\nu$), we can explicitly check that
    \[
    \Ad = B \oplus \left( \mathcal{F}[2] \xrightarrow{\id} \mathcal{F}[1] \right)
    \]
    in analogy to Lemma \ref{LemmaAdjRuthSplitsforMEC} and Lemma \ref{LemmaProjToNormalRepFromExtension}. As the associated inclusion, projection and splitting homotopy (which is induced by reversing the identity arrow between the copies of $\mathcal{F}$) between $C(G, S^p(B^*))$ and $C(G, S^p(\Ad^*))$ are not only compatible with the differential of the graded vector spaces, but also the differential of the representation up to homotopy, this data already describes a solution of the transference problem (alternatively we can check this explicitly, by spelling out the initiator $\delta=D-D^0$ which commutes with the splitting homotopy and obtain the same). Therefore, the isomorphism
    \[
    H(G, S^p(\Ad^*)) \cong H(G, S^p(B^*))
    \]
    is indeed direcly induced by the inclusion $B^* \subset \Ad^*$. As the elements
    \[
        \alpha_1 \in (\Gamma(G_0, \wedge^{a}(\nu^*) \otimes S^b(A^*_{\iso})))^G
    \]
    we previously considered are in the image of this inclusion, we can represent the cohomology classes by the same elements as before and obtain the statement of the theorem.
    
    Lastly, we may note that that this isomorphism does not depend on the choice of inclusions and projections between $B=\mathcal{H}(\Ad)$ and $\Ad$, as long as these correspond to the identity on the cohomology groups. While a priori, we may obtain different inclusions of $C(G, S^p(B^*))$ into $C(G, S^p(\Ad^*))$, the homotopy perturbation lemma provides explicit maps describing the relation between the structures. In particular, for any element in the image of some inclusion, its projection to $\Gamma(G_0, S^p(B^*))$ with respect to any structure corresponds to a unique expression. (This can be seen as the initiator $\delta= D- D^0$ always increases the degree of $G_\bullet$.) So the cohomology class $[\alpha_{-1}]$ must correspond to the expression of $\alpha_{-1}$ and therefore $\pm \alpha$ independent of the choice.
\end{proof}

\begin{cor}
    Let $G$ be a proper and regular Lie groupoid. The differential on the first page of the spectral sequence of Theorem \ref{ThmBottSSAriasAbad} can be computed by $\dCart$.
\end{cor}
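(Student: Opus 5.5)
The plan is to identify the two first pages and their differentials entry by entry. The first page of the spectral sequence of Theorem \ref{ThmBottSSAriasAbad} is, by construction, the first page of the spectral sequence of the column filtration of $\BSS(G)$. Its terms are identified with $H(G, S^q(\Ad^*))$ via the isomorphism of Theorem \ref{ThmCoadRuthComputesColumns}, and its differential $d_1$ is induced by $\ddR$ on the column cohomologies $H(\Omega^p(G_\bullet),\delta) \to H(\Omega^{p+1}(G_\bullet),\delta)$. Since $G$ is proper and regular, Corollary \ref{CorColumnsViaRedRuth} rewrites these terms as $\Gamma(G_0, \wedge^{p-q}\nu^*\otimes S^q(A^*_{\iso}))^G$. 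The statement then reduces to showing that, under this identification, the map induced by $\ddR$ is $\dCart$ up to signs.

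First I transport the problem along Theorem \ref{ThmMainTwoIsosAgree}. That theorem says the isomorphism of Corollary \ref{CorColumnsViaRedRuth} agrees, up to a sign $\pm1$ in each bidegree, with the isomorphism of Theorem \ref{ThmThesisIsoCartInclusionToColumns}, which sends an invariant section to the class of its $J^E G$-isotropy Cartan form. So the $d_1$ computed through the representation-up-to-homotopy identification is conjugate, by diagonal sign matrices, to the $d_1$ computed through the isotropy Cartan identification. By Proposition \ref{PrpThesisDefisDCart} a), the latter map is by definition $\dCart$. Hence the first-page differential equals $\epsilon_{p+1,q}\,\dCart\,\epsilon_{p,q}$ with $\epsilon_{p,q}\in\{\pm1\}$.

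Second, I deal with the signs. Because $d_1$ has a definite bidegree, conjugating by the signs $\pm1$ only changes $\dCart$ by an overall sign on each component. This does not affect kernels and images, so the second page, and in particular the cohomology, is computed by $\dCart$. Where possible I would also check that the sign depends only on the symmetric degree $b=q$: the proof of Theorem \ref{ThmMainTwoIsosAgree} notes that the sign of $\alpha_{-1}$ depends only on $b$, and $\ddR$ preserves the $S(A^*_{\iso})$-degree, by Proposition \ref{PrpThesisDefisDCart} c), which describes $\dCart$ as the component of the de Rham differential retaining the $E$-vertical degree. If that holds, the two signs cancel and $d_1=\dCart$ exactly.

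The main obstacle is justifying that the differential of Theorem \ref{ThmBottSSAriasAbad} really is the map induced by $\ddR$ transported along the column isomorphism of Theorem \ref{ThmCoadRuthComputesColumns}, rather than some other identification. I would settle this by recalling that Arias Abad builds the spectral sequence from exactly the column filtration of $\BSS(G)$, so that its $E_1$ is $H(\Omega^\bullet(G_\bullet),\delta)$ with $d_1=[\ddR]$. The identification with $H(G,S^q(\Ad^*))$ is then applied afterwards.
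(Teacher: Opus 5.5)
Your proposal is correct and follows essentially the same route as the paper: you transport the first-page differential along Theorem~\ref{ThmMainTwoIsosAgree}, identify it with $\dCart$ via Proposition~\ref{PrpThesisDefisDCart}~a), and note that the $\pm 1$ factors depend only on the vertical (simplicial) degree $q$, which $d_1$ preserves, so they cancel. The paper asserts this cancellation outright rather than conditionally, and your fallback remark that sign changes do not affect kernels and images is a harmless extra safeguard.
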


\begin{proof}
    We know by Proposition \ref{PrpThesisDefisDCart}, that $\dCart$ describes the differential on the first page of the spectral sequence associated to the filtration by columns associated to the isomorphism
    \[
    H(\Omega^p(G_\bullet), \delta) \cong (\Gamma(G_0, \wedge (\nu^*) \otimes S(A^*_{\iso}) ))^G
    \]
    of Theorem \ref{ThmThesisIsoCartInclusionToColumns}. As by Theorem \ref{ThmMainTwoIsosAgree} this isomorphism agrees up to a factor of $\pm 1$ with the isomorphism used in Corollary \ref{CorColumnsViaRedRuth}, the expression for the first page differential must also transfer. The scaling factor of $\pm 1$ furthermore only depends on the degree of the vector space in this complex, corresponding to the vertical degree in the Bott-Shulman-Stasheff bicomplex. The corresponding vertical degree is retained by the differential on the first page and therefore the factors on domain and target cancel out.
\end{proof}

\section{Outlook: The Cartan model}

While it is an important first step to be able to compute the differential of the first page of the spectral sequence, an even stronger result would be the existence of a lift of this differential to a map (or even a differential) between the different representations up to homotopy $C(G, S^\bullet(\Ad^*))$ increasing the degree of the symmetric power by 1. The Cartan model can be obtained using representations up to homotopy, where there is a clear candidate for such a lift.

\begin{defi}[cf. \cite{Bl16}, p. 539]
Let $G$ be a Lie groupoid. A connection $\mathcal{D}$ with respect to $s$ is called a Cartan connection if the map
    \begin{center}
    \begin{tikzcd}
	\mathcal{D} \arrow[d, shift right, "Tt" left] \arrow[d, shift left, "Ts" right] \arrow[r, hook] & T G_1 \arrow[d, shift right, "Tt" left] \arrow[d, shift left, "Ts" right] \\
	T G_0 \arrow[r] & T G_0
	\end{tikzcd}
    \end{center}
    is an inclusion of Lie groupoids.
\end{defi}

\begin{remark}
    In the source, the condition of multiplicativity is formulated as the fact that the inclusion $G \rightarrow J^1 G$ induced by $\mathcal{D}$ needs to be a morphism of Lie groupoids. However, using the isomorphism $\mathcal{D} \cong s^*(T G_0)$, we have that the fat groupoids associated to the above VB-groupoids are precisely given by $G$ and $J^1 G$. It can easily be checked that the conditions for the morphisms to be Lie groupoid morphisms agree.
\end{remark}

\begin{prp} \label{PrpCartanConnCoadjRuth}
    Consider $G$ a Lie groupoid with a Cartan connection $\mathcal{D}$. If we construct the adjoint representation up to homotopy using $\mathcal{D}$, then we obtain a strict representation up to homotopy. In particular, we obtain that
    \[
        H(G, S^p(\Ad^*)) \cong H \left( \ldots \rightarrow \Gamma(G_0, (S^p(\Ad^*))^0)^G \rightarrow \Gamma(G_0, (S^p(\Ad^*))^1)^G \rightarrow \ldots \right)
    \]
    is an isomorphism of cohomology groups.
\end{prp}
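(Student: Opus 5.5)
The plan is to show directly that the higher structure map $R_2$ of Definition \ref{DefiAdjointRuth} vanishes when $\mathcal{D}$ is a Cartan connection, and that $R_1$ then acts degreewise as an honest representation commuting with $\rho$. The dual and symmetric powers are then strict as well, and the cohomological statement follows from Lemma \ref{LemmaCohomStrictRuths}. Properness of $G$ must be assumed for that final step, exactly as in Lemma \ref{LemmaCohomStrictRuths}; the statement is implicitly in that setting.

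First I would compute the basic curvature. Take $(x,y) \in G_2$ and $Y \in T_{s(y)}G_0$, and let $W, V, Z$ be the horizontal lifts as in Definition \ref{DefiAdjointRuth}. Since $\mathcal{D} \rightrightarrows TG_0$ is a subgroupoid of $TG_1 \rightrightarrows TG_0$, the pair $(V,W)$ is composable in $TG_1$ because $Ts(V)=Tt(W)$. Hence $T\mu(V,W) \in \mathcal{D}_{\mu(x,y)}$ and $Ts(T\mu(V,W))=Ts(W)=Y$. Uniqueness of the lift gives $T\mu(V,W)=Z$, so $R^{-1}_2 \equiv 0$. As $R_2$ has only this component for the two-term complex, and all $R_i$ with $i>2$ vanish by construction, the adjoint representation up to homotopy is strict once $R_1$ is shown to be an honest action.

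Next I would check that $R_1$ is an honest action. In degree $-1$, $x \mapsto R^{-1}_1(x)$ is the action of the groupoid $\mathcal{D}$ on $TG_0$ by parallel transport. Multiplicativity of $\mathcal{D}$ together with the lift uniqueness argument above gives $R^{-1}_1(\mu(x,y))=R^{-1}_1(x)\circ R^{-1}_1(y)$, and units act trivially because $T1(Y) \in \mathcal{D}$, which follows since the unit map of the subgroupoid $\mathcal{D}$ is the restriction of $T1$. In degree $-2$, $R^{-2}_1(x)(Y)=T\mu(Z,Y,0_{x^{-1}})$ is conjugation inside the groupoid $TG_1$ by the element $Z\in\mathcal{D}$. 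Functoriality of conjugation in $TG_1$, with $\mathcal{D}$ closed under products and inverses, gives the action property. Compatibility with $\rho$ was already computed in the proof of Lemma \ref{LemmaAdjRuthSplitsforMEC}: $\rho\circ R^{-2}_1(x)=R^{-1}_1(x)\circ \rho$. Alternatively, strictness is equivalent to $D^2=0$ together with $R_2=0$, and then the quadratic identities of a representation up to homotopy in that situation reduce to the action property and equivariance of $R_0=\rho$. I would note this to avoid the explicit unit check if convenient.

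Finally, I would pass to $S^p(\Ad^*)$. Duals and symmetric powers of a strict representation up to homotopy are computed degreewise from the induced honest actions and the induced equivariant differential, with no higher terms, so $S^p(\Ad^*)$ is strict. Applying Lemma \ref{LemmaCohomStrictRuths}, which needs $G$ proper, to the dg vector bundle $S^p(\Ad^*)$ gives the displayed isomorphism. I expect the main obstacle to be a bookkeeping point: making sure the constructions of tensor and symmetric powers in the paper's conventions, following \cite{AC13}, introduce no higher components when the inputs are strict. I would handle this by writing the associated derivation on $C(G,S^p(\Ad^*))$ and observing that each of its terms is built from $R_0$ and $R_1$ only.
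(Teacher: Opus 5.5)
Your proposal follows the paper's proof: the Cartan property gives $T\mu(V,W)\in\mathcal{D}_{\mu(x,y)}$ with source $Y$, so uniqueness of horizontal lifts forces $T\mu(V,W)=Z$, hence $R_2=0$, $R_1$ is an honest action, and the displayed isomorphism follows from Lemma \ref{LemmaCohomStrictRuths}. Your additional checks (units, compatibility with $\rho$, strictness of $S^p(\Ad^*)$) are sound, and so is your remark that properness must be assumed for that lemma, which the paper's statement and proof leave implicit.
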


\begin{proof}
    We need to check that the maps of Definition \ref{DefiAdjointRuth} define actions of $G$ on $A$ and $T G_0$ respectively, i.e. that associativity holds. However, the Cartan property precisely implies that for $W \in \mathcal{D}_x$ and $V \in \mathcal{D}_y$ the product satisfies $T \mu(V,W) \in \mathcal{D}_{\mu(x,y)}$, and therefore precisely the properties of $Z$ in the definition of $R_2$. Therefore, $R_2=0$, which corresponds to associativity of $R_1$. The consequence is implied by Lemma \ref{LemmaCohomStrictRuths}.
\end{proof}

\begin{example} \label{ExCartModelRecovery}
    Consider the action of a group $\mathcal{G}$ on a manifold $M$ and assume that the action is proper (e.g. whenever $\mathcal{G}$ is compact). Then the associated action groupoid comes with a product structure on $G_1= \mathcal{G} \times M$ inducing a flat Cartan connection. By Proposition \ref{PrpCartanConnCoadjRuth}, we obtain that
    \[
    H(G, S^p(\Ad^*))=H \left( \bigoplus_{a+b=p} \Gamma(G_0, \wedge^a T^* G_0 \otimes S^b(A^*))^G, d \right).
    \]
    Here, the differential is induced by $\rho^*$. Using the identification $A = \mathfrak{g} \times M$, we can write
    \[
        (\Gamma(M, \wedge^a T^* M) \otimes S^b(\mathfrak{g}^*))^{\mathcal{G}} = (\Omega^a(M) \otimes S^b(\mathfrak{g}^*))^{\mathcal{G}}
    \]
    We may note that each element described this way corresponds to a $\mathcal{G}$-invariant form in $\Omega^p(G_b)$.
\end{example}

    The spaces we see in Example \ref{ExCartModelRecovery} are those of the classical Cartan model and we would hope that the differential $d$ is related to the Cartan differential $\dCart$. By construction $d$ is induced by the anchor and we can explicitly check that it corresponds to the component of $\dCart$ increasing the degree in $S^\bullet(A^*)$. The other component of $\dCart$ is simply the de-Rham differential, applied to the first component of
    \[
            \Gamma(G_0, \wedge^a T^* G_0 \otimes S^b(A^*)) \cong \Omega^a(G_0) \otimes S^b(\mathfrak{g}^*).
    \]

    We may now use techniques similar to Theorem \ref{ThmMainTwoIsosAgree} to analyse the first page differential and how it relates to the de-Rham differential. This is currently work in progress.

\begin{conj}
    Let $G$ be a proper Lie groupoid with a Cartan connection $\mathcal{D}$. Then the differential on the first page of the spectral sequence can be lifted to a differential on
    \[
        (\Gamma(G_0, \wedge^a T^* G_0 \otimes S^b(A^*)))^G.
    \]
    It can be described by the map on the assoicated invariant differential forms given by the component of $\ddR$ retaining the vertical degree with respect to the splitting of $\mathcal{D}$. Furthermore, it may be computed by restricting the horizontal differential that we obtain from the bigrading of the Mehta generalisation of the Cartan model of \cite{Mehta06} (cf. Definition 4.2.21.).
\end{conj}


Considering the setting of a proper action groupoid, the conjecture would imply that the missing component of the Cartan differential in Example \ref{ExCartModelRecovery} indeed corresponds to a lift of the differential on the first page of the spectral sequence. However, it is not implied that the differential of the Cartan model restricts to the differential of the isotropy Cartan model, i.e. that the inclusion on the first page of the spectral sequence can be lifted by the inclusion
\[
    \Gamma(G_0, \wedge (\nu^*) \otimes S(A^*_{\iso}))^G \subset \Gamma(G_0, \wedge (T^* G_0) \otimes S(A^*))^G
\]
induced by some multiplicative Ehresmann connection $E$, even when $E$ extends the product structure.

\begin{example}
    Consider the action of $SO(3)$ on $\mathbb{S}^2$. The isotropy group at each point is given by rotation around its corresponding axis, i.e. $\mathbb{S}^1$. Also, the action is transitive, so we know that the cohomology of the Bott-Shulman-Stasheff complex has to agree with $H(B \mathbb{S}^1) = \mathbb{R}[\mu]$.

    We can now construct a multiplicative Ehresmann connection by considering the splitting
    \[
        A=\mathfrak{g} \times M=A_{\iso} \oplus A_{E}
    \]
    which at each $m \in M$ is given by splitting $\mathfrak{g}$ as $\ker(\rho_m)$ and its complement with respect to the Killing form. As $A_{E}$ is closed under the adjoint action, it extends uniquely to a left and right $\mathcal{G}$-invariant vector bundle on $\mathcal{G} \times M$ that is vertical with respect to $s$ and extends the product connection to a complement of $\ker(Tt) \cap \ker(Ts)$. This distribution is multiplicative by construction, contains the unit section and is retained by the inverse map, so it defines a multiplicative Ehresmann connection.
    
    If we identify $\mathfrak{g}$ with the skew-symmetric 3x3-matrices, the map
    \[
    \begin{pmatrix}
        x \\ y \\ z
    \end{pmatrix}
    \mapsto
    \begin{pmatrix}
        0 & z & -y \\
        -z & 0 & x \\
        y & -x & 0
    \end{pmatrix}
    \]
    trivialises $A_{\iso}$. It can be explicitly checked that the product connection on $A$ projects to a connection on $A_{\iso}$ under which this section is horizontal. As the product connection comes from the product connection on $\mathcal{G} \times M$, which is contained in the multiplicative Ehresmann connection, we find that differentiating the projection of $E$ to the isotropy groupoid must yield exactly the same connection.

    We can now choose the dual trivialisation of $A_{\iso}^*$, induced by the dual section $\gamma \in \Gamma(G_0, A^*_{\iso})$. As the section to $A_{\iso}$ chosen above can be explicitly checked to be invariant under the adjoint action, its dual is invariant under the coadjoint action. Therefore, $\gamma$ describes an isomorphism
    \[
        \Gamma(G_0, S(A^*_{\iso}))^{G} \cong S(\mathfrak{g}^*_{\iso})
    \]
    where $\mathfrak{g}_{\iso} = \mathbb{R}$ denotes the isotropy Lie algebra at an arbitrary point $m \in M$. Note that the invariance condition does not appear anymore, as the isotropy is $\mathbb{S}^1$ and hence abelian (which is also implicitly required for the existence of a trivialisation of $A_{\iso}$ induced by invariant non-vanishing sections). Furthermore, the fact that $\gamma$ was chosen dual to a horizontal section means that it vanishes under the differential of the isotropy Cartan model.

    Now shifting our focus to the product connection on $\mathfrak{g} \times M$, we can explicitly trivialise $A$ by chosing a horizontal basis corresponding to the constant sections described by the elements
    \[
        \theta^a \in \mathfrak{g}^* : \begin{pmatrix}
        0 & a & b \\
        -a & 0 & c \\
        -b & -c & 0
    \end{pmatrix}
    \mapsto a
    \]
    and $\theta^b$ and $\theta^c$ analogously defined as the maps recovering $b$ and $c$. We can now  express our distinguished section $\gamma$ as
    \[
        \gamma(x,y,z) = z \cdot \theta^a - y \cdot \theta^b + x \cdot \theta^c
    \]
    where we identify sections to $A^*$ with functions to $\mathfrak{g}^*$.

    It is immediately clear, that the differential of the Cartan model (which for degree reasons is just given by the de-Rham differential on the functions) does not vanish on $\gamma$ when naively included in the Cartan model. However, one can explicitly compute that the form
    \[
        \omega = z \cdot \ddR (x) \wedge \ddR (y) - y \cdot \ddR (x) \wedge \ddR (z) + x \cdot \ddR (y) \wedge \ddR (z)
    \]
    satisfies that $\gamma + \omega$ is closed under the Cartan differential.

    As the groupoid in our example is transitive, we have that $TG \rightarrow (\nu=0)$ is a canonical projection. This, together with the construction of the differential of the isotropy Cartan model as the exteriour covariant derivative with respect to the projection of the product connection implies that the projection
    \[
        \Gamma(G_0, \wedge (T^* G_0) \otimes S(A^*))^G \rightarrow \Gamma(G_0, S(A^*_{\iso}))^G \cong S(\mathfrak{g}^*_{\iso})
    \]
    is a map of chain complexes, where $\gamma + \omega$ is a preimage of $\gamma$. At the same time, we have that
    \[
        \mathbb{R} \cong \mathfrak{g}^* \rightarrow \Gamma(G_0, \wedge (T^* G_0) \otimes S(A^*))^G
    \]
    linearly extending mapping $1$ to $\gamma + \omega$ yields an inclusion
    \[
        S(\mathfrak{g}^*_{\iso}) \rightarrow \Gamma(G_0, \wedge (T^* G_0) \otimes S(A^*))^G.
    \]
    These maps now identify the isotropy Cartan model as a quasi-isomorphic subcomplex of the Cartan model with a canonical projection. (The fact that the maps induce an isomorphism on cohomology follows for example from the fact that the projection map agrees with the pull-back map to the single point $m \in M$ of which we already know that it is a quasi-isomorphism.) It is notable that the expressions $\gamma$ and $\gamma + \omega$ generating the cohomology are closely related.
\end{example}

So while the differentials and therefore the closed forms of the isotropy Cartan model and Cartan model in this case do not agree, we can reasonably expect a close relation between the two approaches.

\printbibliography

\end{document}